\newtheorem{thm}{Theorem}[section]
\newtheorem{lem}[thm]{Lemma}
\newtheorem{prop}[thm]{Proposition}
\theoremstyle{definition}
\newtheorem{defn}[thm]{Definition}
\theoremstyle{remark}
\newtheorem{rem}[thm]{Remark}
\numberwithin{equation}{section}
\newcommand{\be}{\beta}
\newcommand{\ep}{\varepsilon}
\newcommand{\Si}{\Sigma}
\newcommand{\va}{\varphi}
\newcommand{\x}{\times}
\newcommand{\Z}{\mathbb Z}
\newcommand{\N}{\mathbb N}
\newcommand{\R}{\mathbb R}
\newcommand{\RP}{{\mathbb R}{P}}
\newcommand{\del}{\partial}
\newcommand{\co}{\colon\thinspace}
\renewcommand{\int}{\mathrm {int}}
\newcommand{\cl}{\mathrm {cl} \thinspace}
\begin{document}
\mathsurround=1pt 
\title{Concordance  of decompositions given by defining sequences}

\subjclass[2010]{Primary 57N70, 57M30; Secondary 57P99.}

\keywords{Concordance, decomposition, Antoine's necklace,  cobordism, homology manifold.}


\author{Boldizs\'ar Kalm\'{a}r}

\email{boldizsar.kalmar@gmail.com}


\begin{abstract}
We study the concordance and cobordism of decompositions associated with  defining sequences and we relate 
them to some invariants of toroidal decompositions and to  the cobordism of homology manifolds. 
 These decompositions are often wild Cantor sets and they arise as 
   nested intersections of knotted solid tori. 
 We show that  there are at least uncountably many 
 concordance classes of such decompositions in the $3$-sphere.
\end{abstract}

\maketitle


\section{Introduction}

We study equivalence classes of decompositions of $S^3$ and also decompositions of other manifolds.
These decompositions are given by toroidal defining sequences 
(we use the term {toroidal} for a subspace of an $n$-dimensional manifold 
being homeomorphic to the disjoint union of finitely many copies of $S^{n-2} \x D^2$) 
although more generally
it would be possible to get similar results by considering  handlebodies instead of solid tori in the 
defining sequences. 
The problem of classifying decompositions was studied by many authors.
By \cite{Sh68b} so-called Antoine decompositions in $\R^3$ are equivalently embedded   if and only if
their toroidal defining sequences can be mapped into each other by homeomorphisms of the stages.
More generally \cite{ALM68}   
for a  decomposition $G$ of $\R^3$ given by an arbitrary  
defining sequence made of handlebodies
the homeomorphism type of the pair  $(\R^3 / G, \cl \pi_G ( H_G))$, where
$\pi_G$ is the decomposition map and 
$H_G$ is the union of the non-degenerate elements,
is determined by the homeomorphism types of the consecutive stages of the defining sequence of $G$.
By \cite{GRWZ11} two Bing-Whitehead decompositions of $S^3$ are equivalently embedded if and only if
the stages of the  toroidal defining sequences
are homeomorphic to each other after some number of iterations (counting only the Bing stages).
Decompositions given by defining sequences are upper semi-continuous
and many shrinkability conditions are known about them.
For example, Bing-Whitehead decompositions are shrinkable under some conditions \cite{AS89, KP14}
just like Antoine's necklaces, which are wild Cantor sets.
In \cite{Ze05} the maximal  genus of handlebodies being associated with a  defining sequence 
is used to study  Cantor sets. 

In the present paper we define the  concordance of decompositions (see Section~\ref{concdef}) 
which come with 
toroidal defining sequences. As for knots, slice decompositions 
play an important role in the classification: a decomposition is slice if 
each  component of a defining sequence is slice in a way that the $D^{n-1} \x D^2$ thickened slice disk stages are nested into each other.
Being concordant means the analogous concordance of the solid tori in the defining sequence
and this makes the well-known knot and link concordance invariants possible to apply
in order to distinguish between the concordance classes of such decompositions.
For example, we show that 
the concordance group of decompositions of $S^3$, where the defining sequences have some intrinsic properties, 
has at least uncountably many elements, see Theorem~\ref{uncount1}.
The uncountably many elements that we find are represented by Antoine's necklaces. 

Decompositions appear in studying  manifolds, where cell-like 
resolutions of homology manifolds \cite{Qu82, Qu83, Qu87, Th84, Th04} provide a tool of 
obtaining topological manifolds. 
Decompositions  also appear 
in the proof of the Poincar\'e conjecture in dimension four, see \cite{Fr82, FQ90, BKKPR21}, where
 a cell-like decomposition of a $4$-dimensional manifold yields 
 a decomposition space which is a topological manifold. 
In higher dimensions the decomposition space given by a cell-like decomposition of a compact topological manifold
 is a homology manifold being also a topological manifold if it satisfies the disjoint disk property \cite{Ed16}. 
 A particular result \cite{Ca78, Ca79, Ed80, Ed06} is that the double suspension of every integral homology $3$-sphere
is homeomorphic to $S^5$,  
that is  for every homology $3$-sphere $H$ 
there is a cell-like decomposition $G$ of $S^5$ such that 
the decomposition space is the  homology manifold $\Si^2 H$
 and since $\Si^2 H$ satisfies  the disjoint disk property,
 the decomposition $G$ is shrinkable (and this implies that the decomposition space is $S^5$).  

Beside concordance, 
 we also define and study another equivalence relation, which is 
 the cobordism of decompositions, see Definition~\ref{borddecomp}
and Section~\ref{computebord}. This yields a cobordism group, which has
a natural homomorphism into 
the  cobordism group  of homology manifolds \cite{Mi90, Jo99, JR00}. We study  how homological manifolds  are related to 
the cobordism group of cell-like decompositions via taking the decomposition space. 
It turns out that 
every such decomposition space is cobordant to a topological manifold in the cobordism group of homology manifolds and
they generate a subgroup isomorphic to the cobordism group of topological manifolds, 
 see Proposition~\ref{bordkep}. 
 Often we state and prove our results only for unoriented cobordisms but all the arguments
 obviously work for the oriented cobordisms as well giving the corresponding results. 

The paper is organized as follows.
In Section~\ref{prelim}
we give some basic lemmas and the definitions of the most important notions and 
in Section~\ref{results}
we state and prove our main results.

The author  would like to thank the referee for the helpful comments, which improved the paper.

\section{Preliminaries}\label{prelim}


\subsection{Cell-like decompositions}

Throughout the paper we suppose that 
if $X$ is a compact manifold with boundary and 
$Y$ is a compact manifold with corners, then 
an embedding $e \co Y \to X$ is such that 
the corners of $Y$ are mapped into $\del X$ and the pairs of boundary components near  the corners  of $Y$ are 
mapped into $\int X$ and into $\del X$, respectively.
We also suppose that $e(\int Y) \subset \int X$. 
 If $Y$ has no corners, then 
$\del X \cap e(Y) = \emptyset$. 
 We generalize the notions of defining sequence, cellular set and cell-like set  in the obvious way  
for manifolds with boundary as follows. Recall that a decomposition of a topological space  $X$ is 
 a collection of pairwise disjoint subsets of $X$ whose union is equal to $X$. 

\begin{defn}[Defining sequence for a subset]\label{defseq}
Let $X$ be an  $n$-dimensional manifold with possibly non-empty boundary.
 A \emph{defining sequence}  for a subset $C \subset X$ 
 is a sequence 
 $$c \co \N \to \mathcal P(X)$$
 $$C_0, C_1, C_2, \ldots, C_n, \ldots$$ of compact  $n$-dimensional 
 submanifolds-with-boundary  possibly with corners 
 in $X$ such
 that 
 \begin{enumerate}
 \item
  every
$C_{n+1}$ has a neighbourhood $U$ such that  
$U \subset C_n$,  
\item
in every component of $C_n$ there is a component of $C_{n+1}$,
\item
$\cap_{n=0}^{\infty} C_n = C$ and
\item
if $\del X \neq \emptyset$, then 
there is an $\ep > 0$ such that 
$\del X \x [0, \ep)$ is  a collar neighbourhood 
of $\del X$ and for every $C_n$ such that 
 $C_n \cap \del X \neq \emptyset$
 we have $C_n \cap (\del X \x [0, \ep)) = (C_n \cap \del X) \x [0, \ep)$. 
\end{enumerate}
 A decomposition of $X$ defined by the  defining sequence $c$  is the triple 
 $(X, \mathcal D, C)$, where 
 $C = \cap_{n=0}^{\infty} C_n$ and 
 the elements of $\mathcal D \subset \mathcal P(X)$ 
 are 
\begin{enumerate}
\item
the  connected components of $C$ and 
\item
the points in $X -C$.
\end{enumerate}
We denote the decomposition map by $\pi$. 
\end{defn}

Observe that for a decomposition $(X, \mathcal D, C)$ the set $C$ is non-empty and each of the non-degenerate elements is a subset of $C$. 
There could be singletons in $C$ as well. 
For example in the case of an Antoine's necklace there are no non-degenerate elements, we  choose
 $C$ to be the Cantor set Antoine's necklace itself and so  $C$ consists of singletons. 
Every decomposition  defined by some defining sequence is upper semi-continuous. 
A decomposition $\mathcal D$  of a manifold induces a decomposition on its boundary by intersecting the decomposition elements
with the boundary. 
The decomposition of the boundary $\del X$  induced  by a defining sequence in $X$ is upper semi-continuous.
 This induced decomposition is given by an induced  defining sequence $C_{n} \cap \del X$
  if $D_{n, k} \cap \del X \neq \emptyset$ for  every component $D_{n, k}$ of each $C_n$. 
 If all $C_n$ in a defining sequence are connected, then  $\cap_{n=0}^{\infty} C_n$ is connected.

\begin{defn}[Cell-like set]
A compact subset $C$ of a metric space $X$  is \emph{cell-like} if 
for every neighbourhood $U$ of $C$ there is a neighbourhood $V$ of $C$ in $U$ such that
the inclusion map $V \to U$ is homotopic in $U$ to a constant map.
A decomposition is called cell-like if
each of its decomposition elements is  cell-like.
\end{defn}

Cell-like sets given by defining sequences are connected 
because if the connected components could be separated by open neighbourhoods, then   
 a  homotopy could not deform the set into one single point in the neighbourhoods.


A  space $X$ is \emph{finite dimensional} if for every open cover $\mathcal U$ of $X$ there exists
a refinement $\mathcal V$ of $\mathcal U$ such that no points of $X$ lies in more 
than $K_X$ of the elements of $\mathcal V$, where $K_X$ is a constant depending only on $X$.

\begin{lem}\label{dimdecomp}
Let $\mathcal D$ be a decomposition of a manifold  $X$ possibly with non-empty boundary
given by a defining sequence.
 Then the  decomposition space $X / \mathcal D$ 
 is finite dimensional.
\end{lem}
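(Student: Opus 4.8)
The plan is to show that $X/\mathcal{D}$ has covering dimension at most $n = \dim X$, and in particular is finite dimensional. The basic idea is that the decomposition map $\pi \colon X \to X/\mathcal{D}$ is a closed surjection whose non-degenerate point-inverses lie inside the set $C = \cap_n C_n$, and $C$ itself is contained in arbitrarily thin neighbourhoods $C_n$. So the ``bad'' part of $X/\mathcal{D}$, namely $\pi(C)$, sits inside $\pi(C_n)$ for every $n$, and one wants to control the dimension of $\pi(C)$ by pushing it into a lower-dimensional skeleton. Outside $\pi(C)$, the map $\pi$ is a homeomorphism onto an open subset of the manifold $X$, which is finite dimensional.

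First I would recall the relevant general facts: for a closed surjection $\pi \colon X \to Y$ between metrizable spaces (and $X/\mathcal{D}$ is metrizable here since $\mathcal{D}$ is an upper semi-continuous decomposition of a manifold with finite-dimensional — indeed one-dimensional, for the singleton/component structure — point inverses, or one can invoke Hurewicz-type theorems), there is a dimension inequality. The cleanest route is: $\dim Y \le \dim X + \sup_{y \in Y}\dim \pi^{-1}(y)$ fails in general, but the Hurewicz dimension-raising theorem states $\dim Y \le \dim X + k$ if every fibre has dimension $\le k$; here fibres are points or connected subsets of $C$, but $C$ could itself be large, so I would not use this crudely. Instead I would use the covering-space/skeleton argument special to defining sequences: since each $C_n$ is a compact $n$-manifold with boundary, $\pi|_{C_n}$ maps it onto $\pi(C_n)$, and $\pi(C)=\cap_n \pi(C_n)$ because $\pi$ is closed and the $C_n$ are nested with $\cap C_n = C$. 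We then only need $\dim \pi(C_n) \le n$ for each $n$, which holds because $\pi|_{C_n}$ is a closed map from an $n$-dimensional space, and closed maps do not raise dimension when restricted appropriately — more precisely $\dim \pi(C_n) \le \dim C_n = n$ follows since the non-degenerate fibres over $\pi(C_n)$ are still cell-like subsets of the $n$-manifold $C_n$, so one may either cite that cell-like decompositions of finite-dimensional compacta yield finite-dimensional (indeed, same-or-lower-dimensional) quotients, or simply observe $\pi(C_n)$ is a continuous closed image of the compact metric space $C_n$ with $\dim C_n = n < \infty$, hence $\dim \pi(C_n) < \infty$, and that a countable union (over a suitable exhaustion, together with the open finite-dimensional complement $X/\mathcal{D} \setminus \pi(C)$) of finite-dimensional closed sets is finite dimensional by the countable sum theorem for dimension.

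Concretely the steps would be: (1) note $X/\mathcal{D}$ is metrizable (upper semi-continuity plus the local structure, or Moore-type metrization of u.s.c. decompositions of locally compact separable metric spaces); (2) write $X/\mathcal{D} = (X/\mathcal{D} \setminus \pi(C)) \cup \pi(C)$, where the first piece is homeomorphic via $\pi$ to the open subset $X \setminus C$ of the $n$-manifold $X$, hence has dimension $\le n$; (3) show $\pi(C)$ is closed (since $C$ is closed and $\pi$ is a closed map) and $\pi(C) = \cap_{n} \pi(C_n)$; (4) bound $\dim \pi(C_n) \le n$ because $\pi(C_n)$ is a closed continuous image of the compact $n$-dimensional space $C_n$ under a cell-like map, using the standard fact that such images of finite-dimensional compacta are finite-dimensional — here one can be generous and only claim finite-dimensionality of $\pi(C)\subset \pi(C_0)$, which already suffices; (5) conclude by the countable sum theorem that $X/\mathcal{D}$, being the union of the finite-dimensional closed set $\pi(C_0)$ and a finite-dimensional open set, is finite dimensional, with a uniform bound $K_{X/\mathcal{D}}$ coming from the dimension bound (dimension $\le n$ gives such a bounded-order refinement).

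The main obstacle I expect is step (4): controlling $\dim \pi(C_n)$, i.e. verifying that the cell-like (or merely ``defined by a defining sequence'', with connected components and singletons as elements) quotient of the compact $n$-manifold $C_n$ does not become infinite dimensional. The safe way around it is to avoid needing a sharp bound: it is enough to know $\pi(C_0)$ is finite dimensional, and this follows from a soft argument — $C_0$ is a compact subset of $\R$-something... no; rather, one invokes the fact (a consequence of work of Kolmogorov / the theory of cell-like maps, or for the general u.s.c. case a theorem that the image of a finite-dimensional compactum under a closed map whose fibres are finite-dimensional with a uniform bound is finite-dimensional) applied with fibre-dimension bound $1$ (each element is a point or a connected continuum, but connectedness alone does not bound dimension, so one genuinely uses cell-likeness, which for subsets of a manifold forces the fibre to be cell-like hence UV$^\infty$ and, being contained in an $n$-manifold, of dimension $< n$). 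I would therefore lean on the established result that a cell-like map from a finite-dimensional compact metric space has finite-dimensional image, which covers $\pi|_{C_0}$ directly; everything else is the routine metrization and countable-sum bookkeeping sketched above.
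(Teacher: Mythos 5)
Your reduction to controlling $\dim\pi(C)$ is the right skeleton (and metrizability, the homeomorphism $X/\mathcal D\setminus\pi(C)\cong X\setminus C$, and the sum-theorem bookkeeping are all fine), but step (4) rests on a statement that is false. There is no ``established result that a cell-like map from a finite-dimensional compact metric space has finite-dimensional image'': this is precisely the cell-like dimension-raising problem, and Dranishnikov's examples show that cell-like maps of finite-dimensional compacta (and even cell-like decompositions of high-dimensional manifolds) can have infinite-dimensional image. So you cannot bound $\dim\pi(C_n)$, or even its finiteness, by appealing to cell-likeness of the fibres. Worse, the lemma does not assume the decomposition is cell-like at all: the non-degenerate elements are merely the connected components of $C=\cap_m C_m$, which can be arbitrary continua, so the cell-likeness you lean on is not available.

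What actually saves the day --- and what makes decompositions given by defining sequences special --- is that \emph{all} non-degenerate elements are components of the single compactum $C$. Hence $\pi(C)$, being the image of the saturated closed set $C$ under the closed map $\pi$, is homeomorphic to the space of connected components of $C$; since components coincide with quasi-components in a compact Hausdorff space, this quotient is a totally disconnected compact metric space and therefore zero-dimensional. Combining the zero-dimensional closed set $\pi(C)$ with the open set $\pi(X\setminus C)\cong X\setminus C$ of dimension at most $n$ via the addition theorem gives $\dim(X/\mathcal D)\le n+1$, which is all the lemma claims. This is essentially the content of the results in \cite[Chapter~34]{Da86} that the paper's one-line proof cites; your argument needs step (4) replaced by this component-space observation rather than any cell-like machinery.
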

\begin{proof}
If $X$ has no boundary, then the statement follows from 
 Theorem~2 and Proposition~3 in \cite[Chapter~34]{Da86}.
 If $X$ has non-empty boundary, then 
 the argument is also similar. 
\end{proof}

\subsection{Homology manifolds}

Recall that 
a metric space $Y$ is an \emph{absolute neighbourhood retract}  (or \emph{ANR} for short)
if 
for every metric space $Z$ and embedding $i \co Y \to Z$  such that 
$i(Y)$ is closed there is a neighbourhood $U$ of $i(Y)$ in $Z$ which retracts onto $i(Y)$, that is
$r|_{i(Y)} = \mathrm {id}_{i(Y)}$ for some map $r \co U \to i(Y)$.
It is a fact that every manifold is an ANR.
A space is called a  \emph{Euclidean neighbourhood retract}  (or \emph{ENR} for short)
if it can be embedded into a Euclidean space as a closed subset  so that it is a retract of 
some of its  neighbourhoods. 
It is well-known that 
a space is an ENR if and only if it is a locally compact, finite dimensional, separable  ANR.

%


\begin{defn}[Homology manifold]
Let $n \geq 0$ and let $X$ and $Y$ be finite dimensional  ANR spaces, where
$Y$ is a closed subset of $X$. 
Suppose that for every $x \in X$ we have
\begin{enumerate}
\item
$H_k ( X, X - \{ x \} ) = 0$ for $k \neq n$ and 
\item
$H_n ( X, X - \{ x \} )$ is isomorphic to  $\Z$ if $x \in X - Y$ and  it is isomorphic to  $0$ if $x \in Y$.
\end{enumerate}
Then $X$ is an $n$-dimensional  \emph{homology manifold}. 
The set of points $x \in Y$  
are the \emph{boundary points of $X$} and the set $Y$  is denoted by $\del X$.
A homology manifold is called \emph{closed} if it is compact and has no boundary.
\end{defn}

Since  locally compact and separable homology manifolds  are ENR spaces, a locally compact and separable
 homology manifold is called an \emph{ENR homology manifold}.
In \cite{Mi90} it is proved that
for $n \geq 1$ and for  every compact and locally compact $n$-dimensional homology manifold  $X$
 the set of boundary points $\del X$ 
 is an $(n-1)$-dimensional  homology manifold.


Sometimes a space $X$ without the ANR property but having 
$H_k ( X, X - \{ x \} ) = 0$ for $k \neq n$ and 
$H_n ( X, X - \{ x \} ) = \Z$ in the sense of \v{C}ech homology is also called a homology manifold. These spaces arise as 
quotient spaces of acyclic decompositions of topological manifolds 
\cite{DW83} while ANR homology manifolds are often homeomorphic to 
quotients of cell-like decompositions \cite{Qu82, Qu83, Qu87}.

In the case of cell-like decompositions the decomposition spaces are homology manifolds if they are finite dimensional essentially because 
of the Vietoris-Begle theorem \cite[Theorem~0.4.1]{DV09}. In more detail, we will use the following. 
Let $X'$ be a compact $n$-dimensional  manifold with possibly non-empty boundary, let
$Y$ be  $\del X' \x [0,1]$ and attach $Y$ to $X'$ as a collar  to get a manifold $X$.

\begin{lem}\label{decomphomolmanif}
Let $\mathcal D'$ be a cell-like decomposition of $X'$ given by a defining sequence such that 
$X'$ contains a small open set (intersecting the possibly non-empty boundary) which consists of singletons. 
 Suppose that 
 the induced decomposition  on $\del X'$ is cell-like and it is given by the induced defining sequence. 
 Suppose that 
 in $Y$  a cell-like decomposition $\mathcal E$ is given, where
 $\mathcal E$ is the product of the  decomposition 
 induced by $\mathcal D'$ on $\del X'$ and the trivial decomposition of $[0,1]$. 
  Denote by $\mathcal D$ the resulting decomposition on $X$. 
Then $X/\mathcal D$ 
is an $n$-dimensional ENR homology manifold with possibly non-empty boundary. 
 The boundary points of $X/\mathcal D$ are exactly the points of the ENR homology manifold 
$\pi (\del X)$.
\end{lem}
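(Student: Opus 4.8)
My plan is to realize $X/\mathcal D$ as the image of a cell-like map defined on the compact manifold $X$, to deduce the ENR property from general cell-like-map theory, to obtain the local homology at interior points from the boundaryless Vietoris--Begle argument, and to handle the boundary using the product collar coming from $\mathcal E$. I would begin by checking that the decomposition map $\pi\co X\ra X/\mathcal D$ is a cell-like map: it is a surjection, it is closed (the decomposition being upper semicontinuous) and hence proper since $X$ is compact, and each point-preimage is a decomposition element --- one of the cell-like elements of $\mathcal D'$, or an element of $\mathcal E$, which has the form $K\x\{t\}$ for a cell-like element $K$ of the induced decomposition of $\del X'$ and is therefore cell-like. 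By Lemma~\ref{dimdecomp} the quotient $X/\mathcal D$ is finite dimensional, and it is compact metric, hence separable and locally compact. Since the image of an ANR under a cell-like map onto a finite dimensional space is an ANR (see \cite{Da86}) and $X$ is an ANR, it follows that $X/\mathcal D$ is a finite dimensional, locally compact, separable ANR, i.e.\ an ENR.

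Next I would establish the local homology conditions. By the construction of $\mathcal D$, every decomposition element lies either entirely in $\del X=\del X'\x\{1\}$ --- these being precisely the sets $K\x\{1\}$ --- or entirely in $M:=X\setminus\del X$; hence $\pi^{-1}(x)$ meets $\del X$ exactly when $x\in\pi(\del X)$, and $M$ is an open saturated $n$-manifold without boundary on which $\mathcal D$ restricts to a cell-like decomposition $\mathcal D|_M$ with quotient the open subset $(X/\mathcal D)\setminus\pi(\del X)$. For a point $y$ of this quotient, applying the Vietoris--Begle theorem \cite[Theorem~0.4.1]{DV09} to $\pi|_M$ and to its restriction over the complement of $y$, and combining the resulting \v{C}ech-cohomology isomorphisms by the five lemma, gives an isomorphism of $\check H^{*}\bigl((X/\mathcal D)\setminus\pi(\del X),\, (X/\mathcal D)\setminus(\pi(\del X)\cup\{y\})\bigr)$ with $\check H^{*}(M,\, M\setminus\pi^{-1}(y))$; since all the spaces involved are ANRs, these agree with singular cohomology, and by the universal coefficient theorem the corresponding homology groups agree as well. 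As $\pi^{-1}(y)$ is a cell-like compactum it has trivial reduced \v{C}ech cohomology, so Alexander duality in $M$ identifies $H_{*}(M, M\setminus\pi^{-1}(y))$ with the local homology of $M$ at a point, namely $\Z$ in degree $n$ and $0$ otherwise. Thus every $x\in(X/\mathcal D)\setminus\pi(\del X)$ is an interior point carrying the required local homology.

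It remains to treat $x\in\pi(\del X)$. Here $\pi^{-1}(x)=K\x\{1\}\subset\del X$, and a collar $\del X'\x(1-\ep,1]$ of $\del X$ carries the product of the induced decomposition of $\del X'$ with the trivial decomposition of the interval, so near $x$ the space $X/\mathcal D$ is homeomorphic to $A\x(1-\ep,1]$, where $A=\del X'/(\text{induced decomposition})$ and $x$ corresponds to a point of $A\x\{1\}$. Applying the previous step to the closed manifold $\del X'$ (whose cell-like quotient $A$ is finite dimensional by Lemma~\ref{dimdecomp}) shows that $A$ is a closed $(n-1)$-dimensional ENR homology manifold. Since the local homology of the half-open interval $(1-\ep,1]$ at its endpoint vanishes --- the inclusion $(1-\ep,1)\hra(1-\ep,1]$ being a homotopy equivalence --- a short deformation-retraction argument, or equivalently the K\"unneth formula for local homology, shows that the local homology of $A\x(1-\ep,1]$, and hence of $X/\mathcal D$, at $x$ is zero; so such $x$ are boundary points, and the boundary point set is exactly $\pi(\del X)=A$. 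Combining the three parts, $X/\mathcal D$ is an $n$-dimensional ENR homology manifold whose boundary is precisely the closed $(n-1)$-dimensional ENR homology manifold $\pi(\del X)$.

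The only ingredient here that is not bookkeeping is the assertion in the second step that a manifold has the same local homology along a possibly wildly embedded cell-like set as at a single point; this is the heart of the Vietoris--Begle circle of ideas, and the place to be careful, in particular about the vanishing of the positive-degree \v{C}ech cohomology of a cell-like compactum and about the validity of the duality for an arbitrary compact subset of a manifold (orientability issues being handled with twisted coefficients, which are trivial over a cell-like, hence shape-trivial, set). The ENR statement and the collar analysis near $\del X$ are then formal.
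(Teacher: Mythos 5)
Your proof is correct, but it takes a genuinely different route from the paper's. The paper reduces to the closed case by doubling: it forms $X \cup_{\va} X$ along the identity of $\del X$, checks finite dimensionality of the doubled quotient via covering-dimension estimates, cites \cite[Proposition~8.5.1]{DV09} to conclude the double of the decomposition yields a closed homology manifold $P$, pins down the dimension using the hypothesized open set of singletons, and then recovers $X/\mathcal D$ by cutting $P$ along $\pi(\del X)$ (the vanishing of $H_n$ at points of $\pi(\del X)$ and the ANR property via \cite[Corollary~7.4.8]{DV09} are added at the end). You instead work directly on $X$: you verify the local homology at interior points by running the Vietoris--Begle/duality argument on the open saturated manifold $X \setminus \del X$, and you handle boundary points by observing that a saturated collar $\del X' \x (1-\ep,1]$ descends to a product neighbourhood $A \x (1-\ep,1]$ in the quotient, where $A = \pi(\del X)$ is itself a closed $(n-1)$-dimensional ENR homology manifold by the same interior argument applied to $\del X'$. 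Your approach buys two things: it avoids the doubling construction entirely (and with it the need to re-verify finite dimensionality of the doubled quotient), and it does not actually use the hypothesis about the small open set of singletons, since your duality computation produces the local homology in the correct degree $n$ directly rather than inferring the dimension from a Euclidean patch. What the paper's doubling buys is that the bulk of the homology-manifold verification is delegated to a single citation for closed manifolds; note, though, that the paper still needs essentially your collar argument to see that points of $\pi(\del X)$ have vanishing $n$-th local homology after cutting, so the two proofs converge there. Your identification of exactly which decomposition elements meet $\del X$ (only the sets $K \x \{1\}$) and your care about twisted coefficients in the duality step are both sound and, if anything, more explicit than the original.
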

\begin{proof}
We have to show that 
the quotient space 
$$X / \mathcal D$$
is an $n$-dimensional  homology manifold with boundary the homology manifold
$\pi (\del X)$.
Take the closed manifold $$X  \cup_{\va} X,$$
where
$\va \co \del X  \to \del X$
is the identity map.

The decomposition space $X' / \mathcal D'$
(that is the part of the decomposition space $X / \mathcal D$ which is obtained from 
$X'$)   
is finite dimensional by Lemma~\ref{dimdecomp}. 
The doubling of the decomposition $\mathcal D$ on 
$X  \cup_{\va} X$
yields
a finite dimensional quotient space, we get this by using 
estimations for the covering dimension, see \cite{HW41} and \cite[Corollary~2.4A]{Da86}.
So the decomposition space $P$ obtained by factorizing 
$X  \cup_{\va} X$
by the double of $\mathcal D$ is a closed  finite dimensional homology manifold by  \cite[Proposition~8.5.1]{DV09}. 
Since a small neighbourhood of a singleton results  an open set in $P$ homeomorphic to $\R^{n}$, it is $n$-dimensional.
We obtain the space $X / \mathcal D$
by cutting $P$ into two pieces along  $\pi(\del X)$. Because of a similar argument  
the space $\pi(\del X)$ is a closed  $(n-1)$-dimensional homology manifold. 
The set $\pi(\del X)$ is closed in the decomposition space $X / \mathcal D$ since
$\mathcal D$ is upper semi-continuous and $\del X$ is closed. 
Also, the homology group  $H_n(X / \mathcal D; X / \mathcal D - \{p\} )$ is equal to 
$0$ for every $p \in \pi(\del X)$. So $\pi(\del X)$ is the boundary of $X / \mathcal D$. 

Moreover the space $X / \mathcal D$ is 
 a locally compact separable metric space because $X$ is so. By \cite[Corollary~7.4.8]{DV09}
the space $X/\mathcal D$ is an ANR so it follows that it is an ENR. The same holds for $\pi(\del X)$. 
\end{proof}

\begin{defn}[Cobordism of homology manifolds]
The  closed $n$-dimensional  
homology manifolds $X_1$ and $X_2$ are \emph{cobordant} 
if there exists  a compact $(n+1)$-dimensional homology manifold $W$ such that 
$\del W$ is homeomorphic to the disjoint union of $X_1$ and $X_2$.
The induced cobordism group (the group operation is the disjoint union) is 
denoted by $\mathfrak N_n^H$. In a similar way the induced oriented cobordism group is denoted by $\Omega_n^H$.
\end{defn}
Note that the connected sum of homology manifolds does not always exist. 
Analogously let $\mathfrak N_n^E$ and $\Omega_n^E$ denote the cobordism group and
oriented cobordism group 
of ENR homology manifolds (the cobordisms are also ENR), respectively.

Almost all oriented cobordism groups $\Omega_n^H$ are computed \cite{BFMW96, Jo99, JR00}:
\[
\Omega_n^H = 
\left \{
\begin{array}{cc}
\Z  & \mbox{if $n = 0$}   \\
0  &   \mbox{if $n = 1, 2$}   \\
 \Omega_n^{TOP}[8\Z  +1]   &     \mbox{if $n \geq 6$},
\end{array}
\right.
\]
where $\Omega_n^{TOP}$ denotes the cobordism group of topological manifolds
 and the group $$\Omega_n^{TOP}[8\Z  +1]$$ denotes 
 the group of finite linear combinations 
 $\sum_{i \in 8\Z +1} \omega_i i$ of cobordism classes of topological manifolds.
 By \cite[Corollary~4.2]{Ma71} the oriented cobordism group of manifolds $\Omega_n$ is always a subgroup
of $\Omega_n^H$.

A \emph{resolution} of a homology manifold  $N$ 
is a topological manifold $M$ and a cell-like decomposition of $M$ 
such that the decomposition space is homeomorphic to the homology manifold $N$,
the quotient map $\pi$ is proper 
 and
$\pi^{-1} (\del N) = \del M$. 
By \cite{Qu82, Qu83, Qu87} homology manifolds are resolvable if a local obstruction is equal to $1$, more precisely we have the following.
\begin{thm}[\cite{Qu82, Qu83, Qu87}]\label{resol}
For every $n \geq 4$ and every non-empty connected $n$-dimensional ENR  homology manifold $N$ there is an integer local obstruction $i(N) \in 8\Z + 1$
such that 
\begin{enumerate}[\rm (1)]
\item
if $U \subset N$ is open, then $i(U) = i(N)$,
\item
if $\del N \neq \emptyset$, then 
$i(\del N) = i(N)$, 
\item
$i(N \x N_1) = i(N) i(N_1)$ for any other homology manifold $N_1$,
\item
if $\dim N = 4$ and $\del N$ is a manifold, then there is a resolution if and only if $i(N)=1$ and 
\item
if $\dim N \geq 5$, then there is a resolution if and only if $i(N)=1$.
\end{enumerate}
\end{thm}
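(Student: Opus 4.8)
The plan is to follow Quinn's controlled surgery program; since the statement collects the main theorems of \cite{Qu82, Qu83, Qu87}, I will only outline the architecture and indicate where each clause originates. For a connected ENR homology manifold $N$ with $\dim N = n \geq 4$ one studies \emph{$\ep$-resolutions}: proper maps $f \co M \to N$ from a topological manifold with $f^{-1}(\del N) = \del M$ that, measured in $N$, are homotopy equivalences with point-inverses of diameter $< \ep$. The resolution obstruction is the obstruction to replacing a given $\ep$-resolution by an $(\ep/2)$-resolution, iterating this, and passing to a limit in which the point-inverses become cell-like; it lives in a controlled surgery obstruction group over $N$. The essential computation is that, after stabilising the control, this group collapses to the simply connected controlled $L$-group over a point, which is $\Z$ in the relevant degree. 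Normalising so that a resolvable manifold corresponds to $1$, and using the classical fact that the simply connected surgery obstruction in dimension $4k$ is detected by $\sigma/8$, forces the obstruction into the coset $8\Z + 1$; one \emph{defines} $i(N)$ to be this element.

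Clauses (1)--(3) are then formal consequences of the construction. Locality (1) holds because $i$ is computed from arbitrarily small neighbourhoods, so it cannot distinguish $N$ from an open $U \subset N$. The boundary identity (2) follows from a collar argument: a collar $\del N \x [0,1)$ deformation-retracts the controlled data near the boundary onto that of $\del N$, and the controlled obstruction is invariant under this retraction. Multiplicativity (3) is the product formula for (controlled) surgery obstructions, exactly parallel to $\sigma(V \x W) = \sigma(V)\,\sigma(W)$ for closed manifolds; one must check that the normalisation chosen above is consistent with this, which it is since the product of two elements of $8\Z+1$ again lies in $8\Z+1$.

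The substance is in (4) and (5), the resolution theorems. For $\dim N \geq 5$ one builds an honest resolution by iterating the squeezing step: whenever $i(N) = 1$ the obstruction vanishes, so from an $\ep$-resolution one produces an $(\ep/2)$-resolution, and in the limit the point-inverses shrink to cell-like sets, yielding a resolution; the tools are Quinn's end theorem and the thin (controlled) $h$-cobordism theorem. Conversely an honest resolution forces $i(N) = 1$, because a cell-like map is a controlled homotopy equivalence with vanishing obstruction, and $i(N \x N) = i(N)^2$ together with $i \equiv 1 \pmod 8$ pins down the value. For $\dim N = 4$ with $\del N$ a manifold one cannot squeeze directly; instead one appeals to topological surgery in the style of Freedman--Quinn, available because the fundamental groups occurring in the controlled problem are (locally) good, and this is precisely where the hypotheses ``$\dim N = 4$'' and ``$\del N$ a manifold'' enter.

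The hardest part, and the one that would occupy the bulk of the work, is the controlled surgery machinery behind the squeezing step: the thin $h$-cobordism theorem, the stable identification of the limiting controlled $L$-group with $\Z$, and the point-set bookkeeping needed to guarantee that the successive approximations converge to a genuinely cell-like (not merely acyclic) decomposition. Everything else is either formal, as in (1)--(3), or an application of existing $4$-manifold technology, as in the $\dim N = 4$ case.
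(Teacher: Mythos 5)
This theorem appears in the paper purely as a citation of Quinn's work \cite{Qu82, Qu83, Qu87}; the paper supplies no proof of its own, so there is no internal argument to compare yours against. Your outline is a fair pr\'ecis of Quinn's program --- the local index in $8\Z+1$ arising from $L_0(\Z)\cong\Z$ and the signature-divided-by-$8$ normalisation, locality and the product formula as formal consequences, the squeezing/thin $h$-cobordism argument in dimensions $\geq 5$, and the Freedman--Quinn input in dimension $4$ --- but as you yourself note it is an architectural survey rather than a proof, and the actual verification is delegated to the cited references exactly as the paper does.
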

By \cite{Th84, Th04} a closed  $3$-dimensional ENR homology manifold $N$ is resolvable if 
its singular set has general position dimension less than or equal to one, that is
any map of a disk into $N$ can be approximated by one whose image meets the singular set 
 (i.e.\ the set of non-manifold points) 
of $N$ in a $0$-dimensional set.

\begin{lem}\label{resolcob}
Let $M_{1}$ and $M_{2}$ be two closed $n$-dimensional manifolds, where $n \geq 4$.
If both of them are resolutions of the ENR homology manifold $N$, then 
$M_1$ and $M_2$ are cobordant as manifolds.
\end{lem}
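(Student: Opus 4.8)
The plan is to realize $M_1$ and $M_2$ as the two boundary components of a compact $(n+1)$-dimensional ENR homology manifold, then to resolve that homology manifold and use the cell-like approximation theorem to promote the resolution to an honest manifold cobordism. First I would record that $i(N)=1$: since $M_1$ is a resolution of $N$ there is a proper cell-like decomposition map $\pi_1\co M_1\to N$, and as $M_1$ is closed while $\pi_1$ is surjective, $\del N=\emptyset$; write $\pi_2\co M_2\to N$ for the decomposition map of $M_2$ as well. If $n=4$ then $\del N=\emptyset$ is a manifold and Theorem~\ref{resol}(4) applies, while if $n\geq5$ then Theorem~\ref{resol}(5) applies; in either case the existence of the resolution $\pi_1$ forces $i(N)=1$.

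Next I would form the double mapping cylinder $Z=\mathrm{Map}(\pi_1)\cup_N\mathrm{Map}(\pi_2)$, obtained by gluing the mapping cylinders of $\pi_1$ and $\pi_2$ along their common copy of $N$. Each $\mathrm{Map}(\pi_i)$ is the decomposition space of the cell-like decomposition of the manifold with boundary $M_i\x[0,1]$ whose non-degenerate elements are the sets $\pi_i^{-1}(y)\x\{1\}$, $y\in N$ (these are cell-like since $\pi_i$ is). The induced decomposition on $M_i\x\{0,1\}$ is trivial on $M_i\x\{0\}$ and has quotient $N$ on $M_i\x\{1\}$, so it is cell-like with finite dimensional quotient. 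Hence, by \cite[Proposition~8.5.1]{DV09} together with the doubling-and-cutting argument and the covering dimension estimates used in the proof of Lemma~\ref{decomphomolmanif}, $\mathrm{Map}(\pi_i)$ is an $(n+1)$-dimensional ENR homology manifold whose boundary is the disjoint union of a copy of $M_i$ (the image of $M_i\x\{0\}$) and a copy of $N$ (the image of $M_i\x\{1\}$). Gluing $\mathrm{Map}(\pi_1)$ to $\mathrm{Map}(\pi_2)$ along these boundary copies of $N$ then produces a compact $(n+1)$-dimensional ENR homology manifold $Z$ with $\del Z=M_1\sqcup M_2$, and since $Z$ contains the image of $M_1\x(0,1)$ as an open subset that is a genuine manifold, Theorem~\ref{resol}(1) gives $i(Z)=1$.

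Now $\dim Z=n+1\geq5$ and $i(Z)=1$, so Theorem~\ref{resol}(5) yields a resolution $\Pi\co W\to Z$ with $W$ a compact $(n+1)$-manifold and $\Pi^{-1}(\del Z)=\del W$. Thus $\del W=\Pi^{-1}(M_1)\sqcup\Pi^{-1}(M_2)$ is a disjoint union of two closed $n$-manifolds, and for each $i$ the restriction $\Pi\co\Pi^{-1}(M_i)\to M_i$ is a surjective cell-like map between closed $n$-manifolds, its point-preimages being cell-like (hence connected) subsets of $\del W$. By the cell-like approximation theorem — see \cite{Da86} for $n\geq5$ and \cite{Qu82} for $n=4$ — such a map is a uniform limit of homeomorphisms, so $\Pi^{-1}(M_i)$ is homeomorphic to $M_i$. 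Therefore $W$ is a compact $(n+1)$-manifold with $\del W$ homeomorphic to $M_1\sqcup M_2$, i.e.\ a cobordism between $M_1$ and $M_2$. If $N$ is disconnected one runs this componentwise, and the oriented version follows in the same way.

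The step I expect to be the real obstacle is the construction of $Z$: one has to check that each mapping cylinder $\mathrm{Map}(\pi_i)$ is genuinely a homology manifold having $N$ as a boundary component, so that gluing two of them along $N$ gives a homology manifold with boundary exactly $M_1\sqcup M_2$ and no interior non-manifold points along the gluing locus $N$. This is precisely what forces the careful verification that the induced boundary decomposition is cell-like with finite dimensional quotient and the appeal to the doubling argument of Lemma~\ref{decomphomolmanif}; once $Z$ is in hand, the remainder is bookkeeping with Theorem~\ref{resol} and a single invocation of cell-like approximation.
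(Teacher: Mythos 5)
Your proposal is correct and follows essentially the same route as the paper: form the double mapping cylinder of the two resolution maps, verify it is a compact $(n+1)$-dimensional ENR homology manifold with trivial Quinn index, and resolve it by Quinn's theorem to obtain a manifold cobordism. The only cosmetic difference is at the final step, where the paper invokes the relative form of Quinn's resolution theorem to make the resolution a homeomorphism over the boundary, while you instead apply the cell-like approximation theorem to the restriction of the resolution over $M_1\sqcup M_2$; both finish the argument.
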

\begin{proof}
If there are two resolutions $f_1 \co M_1 \to N$ and $f_2 \co M_2 \to N$ of 
 a closed $n$-dimensional homology manifold $N$, then 
 as in the proof of \cite[Theorem~2.6.1]{Qu82}
 take a resolution 
 $$Y \to X_{f_1} \cup X_{f_2}$$ 
 of the double mapping cylinder $X_{f_1} \cup X_{f_2}$  
 of the maps $f_1$ and $f_2$ 
  by applying \cite[Theorem~1.1]{Qu83} and \cite{Qu87}. 
 This resolution exists because $X_{f_1} \cup X_{f_2}$ is an $(n+1)$-dimensional ENR homology manifold and 
 $i( X_{f_1} \cup X_{f_2} )  =1$.
 Let 
 $$X_{f_1} \cup X_{f_2} \to N \x [-1,1]$$
be the natural map of the double mapping cylinder 
onto $N \x [-1,1]$, where the target $N$ of the two mapping cylinders is mapped 
onto $N \x \{ 0 \}$. 
 
It follows that the composition 
 $$Y \to X_{f_1} \cup X_{f_2} \to N \x [-1,1]
$$ 
 is a resolution, moreover by \cite[Theorem~1.1]{Qu83}
 the cell-like map 
 $Y \to X_{f_1} \cup X_{f_2}$ can be chosen 
so that 
 it is a homeomorphism over the boundary hence
 $Y$ is a cobordism  between $M_1$ and $M_2$.
\end{proof}

%

\subsection{Concordance and cobordism of decompositions}\label{concdef}

We will study decompositions given by defining sequences $C_0, C_1, C_2, \ldots$ such that 
each $C_n$ is a disjoint union of solid tori. 
 We remark  that more generally all the following notions work for  decompositions whose stages are handlebodies instead of just tori.
In a closed $n$-dimensional manifold $M$
 instead of decompositions $(M, \mathcal D, A)$
we will consider decompositions with 
some thickened link which contains the set $A$ so 
 in the following
 a decomposition in $M$ is a quadruple $(M, \mathcal D, A, L)$, where $L \subset M$ is the thickened link and $A \subset L$.
For example an Antoine's necklace is situated inside an unknotted 
solid torus while it can be knotted in many different ways in the solid torus.

\begin{defn}[Concordance of decompositions]\label{concordecomp}
Let $M_1$ and $M_2$ be closed $n$-dimensional manifolds. 
The  decompositions
 $(M_1, \mathcal D_1, A, L_1)$ and $(M_2, \mathcal D_2, B, L_2)$
are \emph{cylindrically related}
if  
there exist
 toroidal  defining sequences
$C_0, C_1, C_2, \ldots$ for $A$ and $D_0, D_1, D_2, \ldots$ for $B$ and 
there exists 
a defining sequence $E_0, E_1, E_2, \ldots$ for a   decomposition $\mathcal E$ 
of a compact $(n+1)$-dimensional manifold $W$ such that 
\begin{enumerate}
\item
$C_0 = L_1$ and $D_0 = L_2$, 
\item
$\del W = M_1 \sqcup M_2$,
\item
each $E_i$ is homeomorphic to  $C_i \x [0,1]$ and 
\item
each $E_i$  bounds the components of $C_i \subset M_1$ and $D_i \subset M_2$ that is  
 $C_i \x \{ 0 \}$ corresponds to $C_i$ and $C_i \x \{ 1 \}$ corresponds to $D_i$. 
\end{enumerate}
Two decompositions $(M_1, \mathcal D_1, A, L_1)$ and $(M_2, \mathcal D_2, B, L_2)$
 are \emph{concordant} if 
 there exist closed $n$-dimensional manifolds
 $M_1', \ldots, M_k'$ and  decompositions $(M_i', \mathcal D_i',  A_i', L_i')$ for every $i = 1, \ldots, k$ 
  such that 
  \begin{enumerate}
  \item
  $(M_1, \mathcal D_1, A, L_1)$ is cylindrically related to 
  $(M_1', \mathcal D_1',  A_1', L_1')$, also for $i = 1, \ldots, k-1$ every 
  $(M_i', \mathcal D_i',  A_i',  L_i')$ is cylindrically related to 
  $(M_{i+1}', \mathcal D_{i+1}',  A_{i+1}',  L_{i+1}')$
 and 
 $(M_k', \mathcal D_k',  A_k',  L_k')$ is cylindrically related to
 $(M_2, \mathcal D_2, B, L_2)$ and 
 \item
for each $A_i' \subset M_i'$, where $i = 1, \ldots, k$,  the two toroidal defining sequences 
 $C_{i, 0}', C_{i, 1}', \ldots$ and  $C_{i, 0}'', C_{i, 1}'', \ldots$  in $M_i'$ 
 appearing in these successive cylindrically related decompositions
 are such that the $0$-th stages 
 $C_{i, 0}'$ and $C_{i, 0}''$ are equal as subsets of $M_i'$.
\end{enumerate}
 Being concordant is an equivalence relation 
 and the equivalence classes are called \emph{concordance classes}.
 \end{defn}
 
Hence being concordant implies that the two decompositions
 are in the same equivalence class of 
the equivalence relation generated by being cylindrically related, that is 
the two decompositions can be connected by a finite
number of cylindrically related  decompositions. 
 Being concordant also implies that 
 the $0$-th stages of two toroidal defining sequences for the two decompositions
 are connected by a single concordance in the usual sense. 
Clearly in the definition 
each $E_i$ intersects  some fixed collar of $\del W$ as the defining sequence in (4) of Definition~\ref{defseq}.
 The  concordance classes form a 
commutative semigroup under the operation ``disjoint union". Moreover this 
semigroup is a monoid because the neutral element 
is the ``empty manifold'', that is the empty set $\emptyset$.
To have a more meaningful neutral element we define the following.

\begin{defn}[Slice decomposition]\label{slicedecomp}
Let $M$  be a closed $n$-dimensional manifold and 
let $(M, \mathcal D, A, L)$  
be a   decomposition of $M$ such that 
there exists a toroidal  defining sequence $C_0, C_1, C_2, \ldots$ with $C_0 = L$ for  $A$. 
Then $(M, \mathcal D, A, L)$ is \emph{slice} if 
it is concordant to a decomposition $(M', \mathcal D', A', L')$
with defining sequence $C_0', C_1', C_2', \ldots$ with $C_0' = L'$ 
 such that
there exists
 a defining sequence $E_0, E_1, E_2, \ldots$ for a   decomposition $\mathcal E$ 
of the  $(n+1)$-dimensional manifold $M' \x [0,1)$, where 
each $E_i$ consists of finitely many $D^{n-1} \x D^2$ bounding the torus components $S^{n-2} \x D^2$ of $C_i' \subset M' \x \{0\}$. 
 \end{defn}
 
 Analogously 
to Definitions~\ref{concordecomp} and \ref{slicedecomp}, we 
define the \emph{oriented concordance} of decompositions by requiring 
all the manifolds  to be oriented in the usual consistent way, 
in this way we also get  a corresponding monoid.   
Observe that the set of concordance classes of slice decompositions is a submonoid 
of the monoid of concordance classes of decompositions.  
 To obtain a group 
we factor out the concordance classes by the classes represented by the slice decompositions
and also by the classes of the form $$[(M, \mathcal D, A, L)] + [(-M, \mathcal D, A, L)],$$
where $-M$ denotes the opposite orientation. Observe that all these classes form a submonoid. 

\begin{defn}[Decomposition concordance group]
Define the relation $\sim$ on the set of concordance classes of decompositions by the following rule:  
$a \sim b$ exactly if
there exist  slice decompositions   $s_1$ and $s_2$ 
and decompositions $(M, \mathcal D, A, L)$  and 
$(M', \mathcal D', A', L')$   
such  that 
$$
 a + [s_1] +  [( M, \mathcal D, A, L )] + [(- M, \mathcal D, A, L )] = 
  b + [s_2] + 
 [( M', \mathcal D', A', L')] + [(- M', \mathcal D', A', L')].
$$
The relation $\sim$ is a congruence and we obtain a commutative group by factoring out by this 
congruence. We call this group the \emph{oriented decomposition concordance group} 
and denote it by $\Gamma_n$.
\end{defn}

If we confine the closed $n$-dimensional  manifolds to $S^n$ and the cobordisms to $S^n \x [0,1]$, then we obtain something similar to the classical link concordance.
For the convenience of the reader we repeat the definitions. 

\begin{defn}[Concordance group of decompositions in $S^n$]\label{concordecompS}
Let $(S^n, \mathcal D_1, A, L_1 )$ and $(S^n, \mathcal D_2, B, L_2 )$
be  decompositions of $S^n$ in the complement of $\infty$.
 They  
are \emph{cylindrically related}
if  
 there exist
toroidal  defining sequences
$C_0, C_1, C_2, \ldots$  for $A$ and $D_0, D_1, D_2, \ldots$   for $B$ and 
there exists 
a defining sequence $E_0, E_1, E_2, \ldots$ for a  decomposition $\mathcal E$ 
of the compact $(n+1)$-dimensional manifold $S^n \x [0,1]$ in the complement of $\{ \infty\} \x [0,1]$ such that 
\begin{enumerate}
\item
$C_0 = L_1$ and $D_0 = L_2$, 
\item
each $E_i$ is homeomorphic to  $C_i \x [0,1]$ and 
\item
each $E_i$  bounds the components of $C_i \subset S^n \x \{0\}$ and $D_i \subset S^n \x \{1\}$. 
\end{enumerate}
Two decompositions are \emph{concordant} if 
\begin{enumerate}
\item
they are in the same equivalence class of 
the equivalence relation generated by being cylindrically related 
so  the two decompositions can be connected by a finite
number of cylindrically related  decompositions
and 
\item
the $0$-th stages of the defining sequences 
appearing in this sequence of  cylindrically related  decompositions
 are concordant as thickened links in the usual sense.
\end{enumerate}
The obtained  equivalence classes are called \emph{concordance classes}. 
 If two decompositions of $S^n$ are given by defining sequences, 
then in the connected sum  (at  $\infty$)  of the two $n$-spheres the ``disjoint union''   
induces 
 a commutative semigroup operation on the set of concordance classes. 
 Then 
 by factoring out  by the 
submonoid of classes of slice decompositions and 
classes of the form $[(S^n, \mathcal D, A, L)] + [(-S^n, \mathcal D, A, L)]$
we
 get a group called the \emph{decomposition concordance group in $S^n$}.
  We denote this group by $\Delta_n$.
\end{defn}

 For example, the Whitehead decomposition in $S^3$ 
is slice \cite{Fr82} and the Bing decomposition in $S^3$ is also slice
because the Bing double of the unknot is slice. Observe that 
the Bing decomposition $(S^3, \mathcal B, C)$ has only singletons, where $C$ is a wild Cantor set.
As another example, a defining sequence in $S^3$ given by the replicating pattern of a solid torus and inside of it 
 a  link made of a sequence of ribbon knots linked with each other circularly 
 can yield a  slice decomposition. 


Since being concordant  implies  that the two decompositions can be connected by a finite
number of cylindrically related  decompositions, all 
invariants 
 of concordance classes defined through defining sequences 
  are invariant under choosing another defining sequence for the same decomposition (while leaving the $0$-th stage unchanged). 
For $n = 3$ in the following we restrict ourselves only to such toroidal 
defining sequences $C_0, C_1, C_2, \ldots$ of decompositions of the closed $n$-dimensional manifolds  
in Definitions~\ref{concordecomp}-\ref{concordecompS}
which 
satisfy 
the following  conditions:
\begin{defn}[Admissible defining sequences and decompositions]\label{adm}
Suppose 
\begin{enumerate}
\item
for $m \geq 1$ each $C_m$ has at least four components in a component of $C_{m-1}$ and 
each component $T$ of  $C_m$ 
is linked to exactly  two other components of $C_m$ in the ambient space $S^3$ with algebraic linking number non-zero
and the splitting number of $T$ and each of  the other components is equal to $0$, 
\item
for $m \geq 1$ the components $A_1, \ldots, A_k$ of  $C_m$ which are in a component $D$ of $C_{m-1}$ 
are linked in such a way that 
if a component $A_i$  is null-homotopic in a solid torus $T$ whose boundary is disjoint from all $A_i$, then 
all $A_i$  are in this solid torus $T$,
\item
$\cap_{m=0}^{\infty} C_m$ is not separated by and not contained in any 
 $2$-dimensional sphere 
$S$ for which 
$S \subset C_m$ for some $m$, 
\item
 every embedded circle in  the boundary of a component of $C_m$
which bounds no $2$-dimensional disk in this boundary
cannot be shrunk to a point in the complement of $\cap_{m=0}^{\infty} C_m$. 
\end{enumerate}
We call such defining sequences and decompositions \emph{admissible}. 
\end{defn}

\begin{prop}
In the connected sum  (at  $\infty$)  of two $3$-spheres the ``disjoint union''   as in Definition~\ref{concordecompS} 
of two admissible toroidal decompositions  is an  admissible toroidal decomposition.
\end{prop}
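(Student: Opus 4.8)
The plan is to verify the four conditions of Definition~\ref{adm} one at a time for the disjoint union, using the observation that in the connected sum $S^3 \# S^3$ performed at $\infty$, the two summands $S^3 \setminus \{\infty\}$ are embedded in $S^3 \# S^3$ as the two sides of a separating $2$-sphere, and each original defining sequence sits entirely in its own side after some initial stage (since the $0$-th stages are thickened links away from $\infty$, and all later stages lie in neighbourhoods of the previous one). First I would record this localization statement precisely: for the disjoint union decomposition, a defining sequence is obtained by taking, in each stage, the disjoint union of the components coming from the two summands, and for $m$ large these two families of components are separated by the connect-sum sphere; for small $m$ one may have to absorb the $0$-th stages into a single $C_0$, but by condition~(3) of admissibility applied to each summand, $\cap C_m$ meets neither the connect-sum sphere nor any sphere contained in a stage, so nothing essential is identified across the sphere.

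Next I would check condition~(1): linking and splitting numbers are computed within a single component of $C_{m-1}$, hence within a single summand for $m\ge 1$, so they are unchanged from the original decompositions; each component is still linked to exactly two others with nonzero algebraic linking number and splitting number zero, since these are local to a $3$-ball (or to $S^3$ minus a point) which embeds unchanged. Condition~(2) is likewise local to a component $D$ of $C_{m-1}$: the solid torus $T$ witnessing null-homotopy of some $A_i$ can be taken inside the summand containing $D$ (its boundary being disjoint from all $A_i$, it lies in a ball away from the connect-sum sphere), so the implication ``all $A_i$ lie in $T$'' transfers directly. The key point throughout is that both conditions quantify over structures inside a single component of the previous stage, and that component lives in one summand.

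The genuine content is in conditions~(3) and~(4), the global ones. For~(3), suppose a $2$-sphere $S \subset C_m$ (in the disjoint-union sequence) separates or contains $\cap C_m = A \sqcup B$. Since for $m$ large $C_m$ is a disjoint union of components each lying in one summand, $S$ lies in one summand, say the first; then $S$ is contained in some stage of the first summand's sequence, and it cannot separate or contain $A$ by that summand's condition~(3); it is disjoint from $B$ entirely. For small $m$ I would argue that the finitely many ``early'' sphere components that might straddle the connect-sum region are handled by reindexing the defining sequence (discarding initial stages is allowed, as noted after Definition~\ref{concordecompS}, since it does not change the decomposition). For~(4), an embedded circle in the boundary of a component $T$ of $C_m$ that bounds no disk in $\del T$ again lies in one summand; its complement in $S^3 \# S^3$ minus $A \sqcup B$ retracts (away from the other summand) onto the complement in that summand's $S^3$ minus the corresponding Cantor-type set, and there the circle is non-trivial by that summand's condition~(4).

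\smallskip

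The main obstacle I anticipate is the bookkeeping near the connect-sum sphere for the first few stages: the two $0$-th stages $L_1, L_2$ must be amalgamated into a single $C_0$ in $S^3 \# S^3$, and one must be careful that the resulting sequence still satisfies the ``at least four components in each component of the previous stage'' clause of~(1) and that no spurious sphere in an early stage violates~(3). Both are resolved by passing to a tail of the defining sequence (which is permitted and does not change the decomposition), at which point every stage is a disjoint union of the two summands' stages living in disjoint balls, and every admissibility condition reduces to the corresponding condition in one summand; the verification is then routine.
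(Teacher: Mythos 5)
The paper's own ``proof'' is a single sentence declaring the verification obvious, so the real question is whether your verification is sound. Your treatment of conditions (1) and (4) is fine (for (4) the clean way to say it is that collapsing the other summand's ball to a point gives a map of the complement of $A\sqcup B$ onto the complement of $A$ in the original sphere, restricting to the identity near the circle). But your argument for condition (3) has a genuine gap: from ``$S$ does not separate $A$'' and ``$S$ is disjoint from $B$'' you conclude that $S$ does not separate $A\sqcup B$, and that inference is false. A sphere $S\subset C_m$ lying in the first summand could a priori have \emph{all} of $A$ in its inner complementary ball and all of $B$ (which is nonempty) outside, and then it separates $A\sqcup B$ even though it separates neither $A$ nor $B$ individually. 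Condition (3) for the summand does not exclude this; one has to rule it out using the \emph{other} admissibility conditions: for $m\geq 1$ (and for a disconnected $C_0$) the set $A$ meets at least four pairwise disjoint components of the stage, so it cannot lie in a ball inside a single component; and for a connected $C_0=L_1$, if $A$ lay in a ball whose closure is in $\int L_1$, the essential curves of $\del L_1$ would shrink in the complement of that ball and hence of $A$, contradicting condition (4) for the summand. Without some such argument the verification of (3) is incomplete.

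Two smaller points. For condition (2) the solid torus $T$ ranges over \emph{all} solid tori in the new ambient sphere, including ones meeting the connect-sum sphere $\Sigma$; your claim that $\del T$ being disjoint from the $A_i$ forces $T$ into a ball away from $\Sigma$ is a non sequitur. The clean fix is to note that there is a homeomorphism of the new $S^3$ onto the first summand's $S^3$ which is the identity on the ball $B_1$ containing that summand's decomposition (it just re-identifies the complementary ball with the ball around $\infty$); it carries $T$ to a solid torus in the original $S^3$ preserving all the relevant containments and null-homotopies, so (2) transfers. Finally, the difficulty you raise about early stages straddling the connect-sum region is spurious --- every stage is contained in $C_0=L_1\sqcup L_2$, whose components already lie in the two disjoint balls, and $C_0$ is allowed to be disconnected since toroidal means a disjoint union of solid tori --- and the proposed remedy of discarding initial stages is not available anyway, because $C_0=L$ is part of the data of the decomposition and the paper explicitly requires the $0$-th stage to be left unchanged.
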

\begin{proof}
Checking the conditions (1)-(4) in Definition~\ref{adm} is obvious, details are left to the reader. 
\end{proof}

%

Then we denote the arising concordance  group in $S^3$ by  
$\Delta_3^a$. 
For example, Antoine's necklaces (or Antoine's decompositions) for $n = 3$ 
have defining sequences satisfying these conditions \cite{Sh68b}. 
We note that by \cite{Sh68b} their defining sequences also have  the property  of 
simple chain type, which means that 
the torus components are unknotted and they are linked like the Hopf link. 
 We have 
the natural group homomorphisms
$$
\Delta_3^a \to \Delta_3\mbox{\ \ \ \ \ and\ \  \ \ }\Delta_3 \to \Gamma_3$$
and also for arbitrary $n$ the group homomorphism
$$
\Delta_n \to \Gamma_n.$$
We will show that 
the number of elements of the group $\Delta_3^a$ is at least uncountable.


Now we define \emph{cobordism} of decompositions, where 
we restrict ourselves to \emph{cell-like} decompositions (not necessarily admissible)  at the cobordisms 
and at the representatives as well.

\begin{defn}[Cobordism of decompositions]\label{borddecomp}
Let $M_1$ and $M_2$ be closed $n$-dimensional manifolds and 
let $(M_1, \mathcal D_1, A)$ and $(M_2, \mathcal D_2, B)$
be cell-like decompositions  such that 
there exist
 toroidal  defining sequences
$C_0, C_1, C_2, \ldots$ for $A$ and $D_0, D_1, D_2, \ldots$ for $B$. 
Then $(M_1, \mathcal D_1, A)$ and $(M_2, \mathcal D_2, B)$
are \emph{coupled}
if  there exists 
a defining sequence $E_0, E_1, E_2, \ldots$ for a cell-like  decomposition $\mathcal E$ 
of a compact $(n+1)$-dimensional manifold $W$ such that 
\begin{enumerate}
\item
$\del W = M_1 \sqcup M_2$,
\item
each $E_i$ is homeomorphic to  the disjoint union 
of finitely many manifolds $P_j^{n-1} \x D^2$, $j = 1, \ldots m_i$, where all $P_j^{n-1}$ are  
compact $(n-1)$-dimensional manifolds and 
\item
each $E_i$  bounds the components of $C_i$ and $D_i$.
\end{enumerate}
We attach a collar $\del W \x [0,1]$ to $W$ along its boundary and extend the decomposition 
$\mathcal E$ to the collar by taking the product  of $\mathcal D_1$ and 
$\mathcal D_2$  with the trivial decomposition on $[0,1]$, respectively. We say that this extended manifold  
$W \cup (\del W \x [0,1])$ and its decomposition is a \emph{coupling} between $(M_1, \mathcal D_1, A)$ and $(M_2, \mathcal D_2, B)$.
 Finally, two decompositions are \emph{cobordant} if they are in the same equivalence class of 
the equivalence relation generated by being coupled. 
The generated equivalence classes are called \emph{cobordism classes}. 
\end{defn}

Clearly
each $E_i$ intersects  some fixed collar of $\del W$ as the defining sequence in (4) of Definition~\ref{defseq}.
 The cobordism classes  form a commutative group 
under the operation ``disjoint union". 
Denote this group by $\mathcal B_n$.

We will show that 
for a cobordism between arbitrary given cell-like decompositions $\mathcal D_{1, 2}$ as in Definition~\ref{borddecomp} if we 
 take the decomposition  space, then we get
 a group homomorphism 
into the cobordism group of homology manifolds.

\section{Results}\label{results}

\subsection{Computations in the concordance groups}

We are going  to define invariants  of  elements of
the group $\Delta_3^a$. 
 With the help of these invariants, we will show that 
 the group $\Delta_3^a$  has at least uncountably many elements.

\begin{defn}
For a given defining sequence 
$C_0, C_1, C_2, \ldots, C_n, \ldots$ in $S^3$  let $$n_{C_0, C_1, C_2, \ldots } = (n_0, n_1, n_2, \ldots )$$ be 
the sequence of the numbers of components of the manifolds $C_0, C_1, C_2, \ldots$.
\end{defn}

If two decompositions of $S^3$ as in Definition~\ref{concordecompS} are cylindrically related, then 
they have defining sequences 
$C_0, C_1, C_2, \ldots$ and $D_0, D_1, D_2, \ldots$   such that 
$$
n_{C_0, C_1, C_2, \ldots }  = n_{D_0, D_1, D_2, \ldots }. 
$$
By \cite[Theorem~3]{Sh68b}   for 
canonical defining sequences of an Antoine's necklace (or an  Antoine decomposition) 
the sequence $n_{C_0, C_1, C_2, \ldots }$ 
 uniquely exists (note that $C_0$ is only an unknotted solid torus which is not 
 appearing in \cite{Sh68b}).

\begin{prop}\label{compnumbsame}
 Let  $(S^3, \mathcal D, A, C_0)$ be an admissible
decomposition  and  let
  $C_0, C_1, C_2, \ldots$ and $D_0, D_1, D_2, \ldots$  be
  admissible defining sequences 
  for $(S^3, \mathcal D, A, C_0 )$, where we suppose that   $C_0 = D_0$. 
Then we have
$$n_{C_0, C_1, C_2, \ldots }  = n_{D_0, D_1, D_2, \ldots}.$$
\end{prop}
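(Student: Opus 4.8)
The proposition extends \cite[Theorem~3]{Sh68b}, which treats simple chain type (Antoine) defining sequences, and the conditions of Definition~\ref{adm} are designed precisely so that the corresponding rigidity argument survives in this more general setting. The plan is to prove, by induction on $m$, the stronger statement that there is a bijection $\beta_m$ between the components of $C_m$ and the components of $D_m$ such that $T\cap A=\beta_m(T)\cap A$ for each component $T$ of $C_m$; passing to cardinalities then gives that $C_m$ and $D_m$ have the same number of components for every $m$, which is the assertion. Since both sequences are defining sequences for the same set $A=\cap_m C_m=\cap_m D_m$ and $C_0=D_0$, the case $m=0$ is immediate. For the inductive step, fix a component $W$ of $C_m$ and put $W'=\beta_m(W)$, so that $A_0:=W\cap A=W'\cap A$. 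Every component of $C_{m+1}$ contained in $W$, and every component of $D_{m+1}$ contained in $W'$, has its intersection with $A$ contained in $A_0$, and these intersections cover $A_0$; hence it suffices to build a bijection $\phi$ from the set $\mathcal P=\{P_1,\dots,P_p\}$ of components of $C_{m+1}$ in $W$ to the set $\mathcal Q=\{Q_1,\dots,Q_q\}$ of components of $D_{m+1}$ in $W'$ with $P_i\cap A=\phi(P_i)\cap A$, and then to take $\beta_{m+1}$ to be the union of these bijections over the components of $C_m$. By conditions (1) and (2) of Definition~\ref{adm}, each of $\mathcal P$ and $\mathcal Q$ is an admissible \emph{necklace}: a cyclic chain of solid tori in which consecutive members are linked with nonzero algebraic linking number, non-consecutive members are split, and which cannot be broken apart.

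To compare the two necklaces I would interleave the two defining sequences. Since $\{C_k\cap W\}_{k\ge m}$ and $\{D_k\cap W'\}_{k\ge m}$ are decreasing sequences of compacta forming neighbourhood bases of $A_0$, for all sufficiently large $N$ every component of $C_N$ contained in $W$ lies in a unique $P_i$ and in a unique $Q_j$; moreover, by iterating condition (2) of Definition~\ref{defseq} and using that each bead has non-empty intersection with $A$, every $P_i$ and every $Q_j$ contains such a component. Thus the assignments $R\mapsto P_i$ and $R\mapsto Q_j$ determine two partitions of the finite set of components of $C_N$ contained in $W$, and the desired bijection $\phi$ exists exactly when these two partitions coincide -- equivalently, when $P_i\cap A$ and $Q_j\cap A$ are, for all $i$ and $j$, either equal or disjoint.

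I expect the verification of this dichotomy to be the main obstacle; it amounts to ruling out that a bead of one necklace ``straddles'' the $A$-content of two or more beads of the other, and it should use the remaining admissibility conditions as follows. First, neither partition can properly refine the other: if $Q_j\cap A$ were a proper non-empty subset of some $P_i\cap A$, the necklace structure would force $Q_j$ to be null-homotopic in the solid torus $P_i$ (the essentially-winding case being excluded by condition (4), together with the fact, itself a consequence of (4), that a meridian disk of $P_i$ must meet $A$), and condition (2) of Definition~\ref{adm} applied to $\mathcal Q$ would then force every bead of $\mathcal Q$ into $P_i$, giving $A_0=\bigcup_l(Q_l\cap A)\subseteq P_i\cap A$, which is proper since $\mathcal P$ has at least four beads -- a contradiction. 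Second, the two partitions cannot cross: after making $\partial Q_j$ transverse to $\bigcup_i\partial P_i$, an innermost-disk and minimal-intersection argument using the incompressibility provided by condition (4) and the linking and splitting data of condition (1) should show that each component of $Q_j\cap\bigcup_i P_i$ and of $Q_j\setminus\bigcup_i P_i$ is either inessential or confined to a single bead, with condition (3) excluding the degenerate case of a $2$-sphere inside some stage that separates $A_0$. Granting the dichotomy, $\phi$ is determined and the induction goes through. As in \cite{Sh68b}, the bookkeeping in this last step is somewhat intricate, but it is entirely forced by the admissibility conditions.
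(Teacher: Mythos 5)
Your overall strategy is the same as the paper's: induct over the stages, use general position and innermost-disk arguments (with conditions (3) and (4) of Definition~\ref{adm}) to put the two families of tori in standard position, and then use the linking conditions (1) and (2) to force a bead-by-bead bijection. The null-homotopic subcase is handled exactly as in the paper: condition (2) pushes all beads of $\mathcal Q$ into a single $P_i$ and the ``at least four components'' clause of (1) gives the contradiction. However, there is a genuine gap at what you yourself identify as the main obstacle, and it sits precisely where the paper does most of its work.

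The gap is the parenthetical claim that ``the essentially-winding case is excluded by condition~(4).'' Condition~(4) only says that essential circles on the boundary tori of the stages do not bound in the complement of $A$; it does not prevent a bead $Q_j$ from lying in $P_i$ with non-zero winding number while $Q_j\cap A$ is a \emph{proper} subset of $P_i\cap A$ --- that is, it does not rule out the configuration in which one bead of one necklace contains two (or more) beads of the other, each winding essentially. A meridian disk of $P_i$ meeting $Q_j$ tells you nothing about whether it meets $A$, and nothing forces $Q_j\cap A=P_i\cap A$. This is exactly the case the paper's proof of Proposition~\ref{compnumbsame} is mostly devoted to: assuming a component $T$ of one sequence contains two components $T_1,T_2$ of the other, it walks along the necklace (choosing $T'$ linked to $T$, then $T''$ linked to $T'$, with associated beads $T_3$ and $T_4$ of the other sequence) and, after a case analysis on which torus contains which, concludes that $T_3$ would be linked with non-zero algebraic linking number to \emph{three} distinct components ($T_1$, $T_2$ and $T_4$), contradicting the ``linked to exactly two others'' clause of condition~(1). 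Some argument of this kind (or a substitute exploiting (1)) is indispensable; without it the dichotomy ``$P_i\cap A$ and $Q_j\cap A$ are equal or disjoint'' is not established and the bijection $\phi$ does not exist. The remaining pieces of your sketch (the transversality cleanup for the crossing case, and the observation that each bead captures at least one component of a deep stage $C_N$) are consistent with the paper's argument, which performs the cleanup first, via self-homeomorphisms of the complement of $A$, so that every component of $C_1$ is nested with respect to the components of $D_1$ before the combinatorial step.
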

\begin{proof}
Suppose that 
$C_0, C_1, C_2, \ldots$ and $D_0, D_1, D_2, \ldots$ are admissible defining sequences for 
a decomposition $(S^3, \mathcal D, A, C_0)$ such that $C_0 = D_0$. 
Of course 
$$\cap_{n = 0}^{\infty} C_n = A = \cap_{n = 0}^{\infty} D_n.$$
We use an algorithm applied in \cite[Proof of Theorem~2]{Sh68b}. 
We restrict ourselves to 
one component of $C_0$ and to the components of the  defining sequences in it, the following argument
 works the same way for the other components. 
We can suppose that 
$\del C_1 \cap \del D_1$ is a closed $1$-dimensional submanifold of $S^3$.
 Suppose some  component $P$ of $\del C_1 \cap \del D_1$
bounds a  $2$-dimensional disk $Q \subset \del D_1$.
Also suppose that $P$ is an innermost component of $\del C_1 \cap \del D_1$ in 
$\del D_1$ so $\int Q \cap \del C_1 = \emptyset$. 
By (4) in Definition~\ref{adm} if $P$ does not bound a disk $Q'$ in $\del C_1$, then 
$P$ is not homotopic to constant in 
the complement of $A$ but then $P$ cannot bound the disk $Q \subset \del D_1$. 
Hence 
$P$ bounds a disk $Q' \subset \del C_1$ as well. 
Then the interior of the sphere $Q \cup Q'$ 
does not intersect $A$ 
because of (3) in Definition~\ref{adm}. 
So we can modify $C_1$ by pushing $Q'$  through the sphere $Q \cup Q'$ by a self-homeomorphism of the complement 
of $A$ 
and hence we obtain  fewer circles in the new $\del C_1 \cap \del D_1$.
After repeating these steps finitely many times we obtain 
a new $C_1$ such that $\del C_1 \cap \del D_1$ contains no circles which bound disks
 on $\del C_1 \cup \del D_1$. 
 Similarly, by further adjusting $C_1$ in the complement of $A$ as written on  \cite[page~1198]{Sh68b}
  in order to eliminate the circles in $\del C_1 \cap \del D_1$ which bound annuli
 we finally obtain a $C_1$ such that 
\begin{itemize}
\item
the intersection $\del C_1 \cap \del D_1$ is empty,
\item
no component of $C_1$ is disjoint from all the components of $D_1$ and vice versa,
\item
each component of $C_1$ is inside 
a component of $D_1$ or it contains some components of $D_1$.
\end{itemize} 

Then we can see that there is a bijection between the 
number of components of $C_1$ and $D_1$ because of the following.

If a component of $C_1$ is in $\int D_1$ and it is homotopic to constant 
in $\int D_1$, then all the other components of $C_1$ are in the same component of $\int D_1$ by (2) in Definition~\ref{adm}.
This would result that no part of $A$ is in other components of $D_1$, which would  contradict to 
(1) in Definition~\ref{adm}
 so 
no 
component of $C_1$ in $\int D_1$  is homotopic to constant 
in $\int D_1$. The same holds if we switch the roles of $C_1$ and $D_1$. 
This means that 
\begin{itemize}
\item
the winding number
of a component $T$ of $C_1$ in the component of $D_1$ which contains $T$ is not equal to $0$ and the same holds
for $D_1$ and $C_1$ with opposite roles. 
\end{itemize}

Furthermore 
suppose that $T$ is some component of $D_1$ and
 $T$  contains at least two  components $T_1$ and $T_2$ of $C_1$.
 Then 
   $T$ is linking with other component $T'$ of $D_1$ by (1) in Definition~\ref{adm} with algebraic 
   linking number non-zero.  
Let $T_3$ be a component of $C_1$ such that 
 $T_3 \subset T'$ or $T' \subset T_3$. 
If $T_3 \subset T'$, then 
$T_1$ and $T_2$ are linking with $T_3$ with algebraic linking number non-zero.
If $T' \subset T_3$, then 
$T$ is not in $T_3$ because for example 
$T_1$ cannot be in $T_3$. 
But then $T$ is linking with $T_3$ with algebraic linking number non-zero since the same holds for $T$ and $T'$. 
 So again we obtained that $T_1$ and $T_2$ are linking with $T_3$ with linking number non-zero.
Now, there is a $T''$ component of $D_1$
 which is linking with $T'$ with linking number non-zero and which is disjoint from all the previously mentioned tori 
 ($T', T'' \subset T_3$ is impossible because then both of $T', T''$ are linking with $T$ and also with each other 
 and this contradicts to (1) in Definition~\ref{adm}).
  Let $T_4$ be a component of $C_1$  such that 
 $T_4 \subset T''$ or $T'' \subset T_4$. There are a number of cases to check. 
 If $T_3 \subset T'$
  and  $T_4 \subset T''$, then 
  $T_3$ is linking with $T_4$. 
  If $T_3 \subset T'$ but $T'' \subset T_4$, then 
  since $T_4$ cannot contain 
  $T$ or $T'$, we have again that 
  $T_3$ is linking with $T_4$. 
  Finally, if $T' \subset T_3$, then since $T''$ cannot be in $T_3$, we have that 
  $T_4 \subset T''$ implies that $T_3$ and $T_4$ are linking and
  $T'' \subset T_4$ implies that 
 since  $T_4$ is disjoint from all the other tori, again $T_4$ is linking with $T_3$.
So we obtain that 
$T_1$ and $T_2$ are linking with $T_3$
 and $T_3$ is linking with $T_4$
 resulting that $T_3$ is linking with three other components of $C_1$ which contradicts to 
(1) in Definition~\ref{adm}.
Summarizing, we obtained the following.
\begin{itemize}
\item
The intersection $\del C_1 \cap \del D_1$ is empty,
\item
no component of $C_1$ is disjoint from all the components of $D_1$ and vice versa,
\item
every component of $C_1$ contains one component of $D_1$ or is contained in one component of $D_1$, 
\item
no component of $C_1$ contains more than one component of $D_1$ and vice versa.
\end{itemize}
All of these imply that 
the number of components of $C_1$ is equal to the number of components of $D_1$. 
We repeat the same line of arguments for the components of $C_2$ and $D_2$ lying in each component of $C_1$ or $D_1$ separately, 
where we perform the previous algorithm in the larger component which contains the smaller one, 
 and so on, in this way we get the result.
\end{proof}

\begin{rem}
If in (1) in Definition~\ref{adm}
we require having splitting number greater than $0$ instead of having 
algebraic linking number non-zero, then 
the previous arguments could be repeated  
to get a similar result 
if we could prove that 
having two solid tori with splitting number greater than $0$ 
and embedding one circle into each of these tori 
with non-zero winding numbers 
results that the splitting number of these two knots is greater than $0$. 
For similar results about knots and their unknotting numbers, see \cite{ST88, HLP22}. 
\end{rem}

It follows that if two admissible decompositions of $S^3$ are
in the same equivalence class 
of 
the equivalence relation generated by being cylindrically related, 
then they determine the  same sequence of numbers of components. 
So if we define the operation 
$$
(n_0, n_1, \ldots )  +  (m_0, m_1, \ldots ) = (n_0 + m_0, n_1 + m_1, \ldots )
$$
on the set of sequences, then 
the induced map 
$$
[ ( S^3,  \mathcal D, A, C_0 ) ] \mapsto n_{C_0, C_1, C_2, \ldots},$$
where $C_0, C_1, C_2, \ldots$ is some admissible defining sequence, 
is a monoid homomorphism. 

\begin{defn}
For an equivalence class $x$ represented by 
 the admissible decomposition $( S^3,  \mathcal D, A, C_0 )$ and for its admissible defining sequence
 $C_0, C_1, \ldots$ 
let 
$$
L(x) = (l_1, l_2, \ldots)$$
be the sequence of numbers mod $2$ 
of the components of $C_m$ which have non-zero algebraic linking number with some other component of $C_m$.
\end{defn}

\begin{lem}
The map $L$ is well-defined i.e.\ 
admissible decompositions being concordant through finitely many
cylindrically related admissible decompositions have the same value of $L$.
\end{lem}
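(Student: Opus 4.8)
The plan is to reduce the well-definedness of $L$ to the combinatorial picture already obtained in the proof of Proposition~\ref{compnumbsame}. Suppose $(S^3, \mathcal D, A, C_0)$ and $(S^3, \mathcal D', B, C_0')$ are admissible decompositions that are cylindrically related, and fix admissible defining sequences $C_0, C_1, \ldots$ for $A$ and $D_0, D_1, \ldots$ for $B$ together with the cylindrical defining sequence $E_0, E_1, \ldots$ of the decomposition $\mathcal E$ of $W$ with $\del W = M_1 \sqcup M_2$, $E_i \cong C_i \times [0,1]$, and $E_i$ bounding $C_i \subset M_1$ and $D_i \subset M_2$. By the definition of cylindrically related decompositions, each $E_i$ gives a fixed bijection between the components of $C_i$ and those of $D_i$, namely $T \times \{0\} \leftrightarrow T \times \{1\}$. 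So $n_{C_0, C_1, \ldots} = n_{D_0, D_1, \ldots}$ as in the discussion preceding the lemma, and it suffices to show that under this bijection a component of $C_m$ has non-zero algebraic linking number with some other component of $C_m$ if and only if the corresponding component of $D_m$ does; the mod $2$ count $l_m$ (indeed the exact count) is then literally the same on both sides.

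First I would argue that the linking data is carried across by the cylinder $E_m$. A component $T \times [0,1]$ of $E_m$ is an embedded $S^{n-2} \times D^2 \times [0,1]$ in $W$; its two boundary copies $T \times \{0\}$ and $T \times \{1\}$ are homologous in the complement (in $W$) of the other cylinders $T' \times [0,1]$. Hence for any two components $T, T'$ of $C_m$ the algebraic linking number of $T \times \{0\}$ and $T' \times \{0\}$ in $M_1 = S^3$ equals that of $T \times \{1\}$ and $T' \times \{1\}$ in $M_2 = S^3$: both compute the same intersection number in $W$ of the core of $T \times [0,1]$ with a Seifert-type $3$-chain bounded by $T' \times [0,1]$, using that $H_1(W \setminus (\text{other cylinders}))$ detects linking consistently on the two ends. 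This is the standard invariance of (algebraic) linking number under concordance, applied componentwise. Consequently the set of components of $C_m$ with a non-trivial algebraic linking partner maps bijectively, via $E_m$, onto the analogous set for $D_m$, which gives equality of the sequences $L$ for cylindrically related decompositions, and then equality through a finite chain of such relations follows by transitivity and the condition (2) in Definition~\ref{concordecomp} that successive $0$-th stages agree.

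For the general concordance, which is the equivalence relation generated by being cylindrically related through finitely many intermediate admissible decompositions, I would just compose: if $x_0, x_1, \ldots, x_k$ is such a chain with consecutive terms cylindrically related, then $L(x_0) = L(x_1) = \cdots = L(x_k)$ by the previous paragraph, so $L$ descends to concordance classes. I expect the main obstacle to be the first half: making precise that the cylinders $E_m \cong C_m \times [0,1]$ in $W$ really do transport the algebraic linking numbers unchanged, since one must check that the Mayer--Vietoris / homological bookkeeping in $W$ (which need not be $S^3 \times [0,1]$ in the $\Gamma_n$ setting, though it is for $\Delta_3^a$) still pins down the linking number from each end. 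For $\Delta_3^a$ the ambient cobordism is $S^3 \times [0,1]$ and this is exactly the classical statement that concordant links have equal pairwise linking numbers, so one can cite or reprove it directly; the only care needed is that the components of $E_m$ are thickened higher-genus-free pieces $S^{n-2} \times D^2 \times [0,1]$ rather than annuli, but this changes nothing in the linking-number computation since only the cores matter.
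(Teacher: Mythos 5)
Your first half --- that a cylindrical relation carries the pairwise algebraic linking numbers of the components of $C_m$ to those of the corresponding components of $D_m$ --- is correct and is exactly how the paper disposes of that step (for $\Delta_3^a$ the ambient cobordism is $S^3\times[0,1]$ and this is classical concordance invariance of linking numbers). The gap is in what you dismiss as "transitivity plus condition (2)". At each intermediate decomposition $(M_i',\mathcal D_i',A_i',L_i')$ in the chain, the cylindrical relation to its predecessor uses one admissible defining sequence $C_{i,0}', C_{i,1}',\ldots$ and the relation to its successor uses another, $C_{i,0}'', C_{i,1}'',\ldots$, and condition (2) only guarantees $C_{i,0}'=C_{i,0}''$; the higher stages may be entirely different solid tori. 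So to conclude $L(x_0)=L(x_k)$ you must first show that $L$ computed from two different admissible defining sequences of the \emph{same} decomposition, agreeing only at the $0$-th stage, gives the same answer. The fact that the $0$-th stages agree does not by itself say anything about linking at stage $m\geq 1$, and your proposal never addresses this.

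This independence of the choice of defining sequence is in fact the bulk of the paper's proof. It re-runs the isotopy/innermost-circle algorithm from the proof of Proposition~\ref{compnumbsame} to put the two sequences $C_1$ and $D_1$ in a position where their boundaries are disjoint, the components are in bijection, each component of one is nested in (or contains) exactly one component of the other, and the nested tori have non-zero winding number. Then it uses the elementary observation that, for such nested solid tori, a pair $T, T'$ of components of $C_1$ has algebraic linking number zero if and only if the corresponding nested components of $D_1$ do (a knot with non-zero winding number in $T'$ links $T$ with linking number zero iff $T$ and $T'$ do), and iterates down the stages. Without this argument --- or some substitute for it --- the map $L$ is not known to be well defined on a single decomposition, let alone on concordance classes, so the proposal as written does not prove the lemma. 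Note also that you identify the concordance-invariance of linking numbers as "the main obstacle"; it is actually the routine half, and the defining-sequence independence is where the admissibility conditions (1)--(4) of Definition~\ref{adm} are genuinely needed.
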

\begin{proof}
If decompositions with defining sequences $C_0, C_1, \ldots$ and 
$D_0, D_1, \ldots$ 
are cylindrically related, then for every $m \geq 0$ 
 the pairs of components of $C_m$ and the pairs of corresponding components of $D_m$ 
 have the same algebraic linking numbers. Suppose for a decomposition there are two 
 admissible defining sequences 
$C_0, C_1, \ldots$ and 
$D_0, D_1, \ldots$ 
 such that $C_0 = D_0$, we have to show that the linking numbers are equal to $0$ simultanously for both of them (for the components of
 $C_0$ and $D_0$ this is obviously true). Of course we know
 that the components are in bijection with each other 
  by the proof of Proposition~\ref{compnumbsame}
 and in every component of $C_0$
  after some deformation
  we have that 
 \begin{itemize}
\item
the intersection $\del C_1 \cap \del D_1$ is empty,
\item
no component of $C_1$ is disjoint from all the components of $D_1$ and vice versa,
\item
every component of $C_1$ contains one component of $D_1$ or is contained in one component of $D_1$, 
\item
no component of $C_1$ contains more than one component of $D_1$ and vice versa.
\end{itemize}
If a component $T$ of $C_1$ is linked with a component $T'$ of $C_1$
with linking number $0$, then 
any knot in $T'$ is linked with $T$ with linking number $0$.
Also, if a knot in $T'$ is linked with $T$ with linking number $0$, then 
$T$ and $T'$ are linked with linking number $0$.
For every $m \geq 1$ 
after a finite number of iterations
of the algorithm in the proof of Proposition~\ref{compnumbsame} we get the result.
\end{proof}

Of course 
the map $L$
is a monoid homomorphism moreover 
for a class $x$ represented by a slice decomposition 
we have $L(x) = (0, 0, \ldots)$.
 Also, for a class $x$ of the form $[(S^n, \mathcal D, A)] + [(-S^n, \mathcal D, A)]$
we have $L(x) = (0, 0, \ldots)$ since all the linking components appear twice.

\begin{defn}
We call the function 
$$
\nu \co \Delta_3^a \to \Z_2^{\N}$$
obtained by 
$\nu ( [x] ) = L(x)$ 
 the \emph{mod $2$ component number sequence} of the elements of $\Delta_3^a$. 
\end{defn}

\begin{thm}\label{uncount1}
There are at least uncountably many different elements in the concordance group $\Delta^a_3$.
These can be represented by Antoine decompositions. 
\end{thm}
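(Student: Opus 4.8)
The plan is to exhibit a map from the set of $\{0,1\}$-sequences (or some uncountable subset of it) into $\Delta_3^a$ and show it is injective, using the invariant $\nu$ constructed above. The key realization is that $\nu \co \Delta_3^a \to \Z_2^{\N}$ is a monoid homomorphism (well-defined by the preceding lemma), and $\Z_2^{\N}$ is uncountable, so it suffices to show that $\nu$ is surjective, or at least that its image is uncountable. To do this I would, for each sequence $\ell = (\ell_1, \ell_2, \ldots) \in \Z_2^{\N}$, build an explicit admissible toroidal defining sequence $C_0, C_1, C_2, \ldots$ in $S^3$ realizing a prescribed number of ``linked'' components at each stage whose parities are exactly $\ell_m$. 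The natural building block is an Antoine's necklace: inside each solid-torus component of $C_{m-1}$ one places a chain of at least four unknotted solid tori linked like the Hopf link (simple chain type), so that conditions (1)--(4) of Definition~\ref{adm} are satisfied; this is exactly the classical Shilepsky setup from \cite{Sh68b}.

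The construction of the building blocks is the heart of the argument. At stage $m$, inside a component $D$ of $C_{m-1}$, I would insert a chain of $k_m$ solid tori arranged in a cyclic Hopf-linked necklace, where $k_m$ is chosen so that the number of components having non-zero algebraic linking number with some other component has the prescribed parity $\ell_m$. In a cyclic chain of $k_m$ tori linked consecutively, every torus is linked with exactly two neighbours with linking number $\pm 1$, so all $k_m$ of them count toward $L$; choosing $k_m$ even or odd (while keeping $k_m \geq 4$) directly controls $\ell_m$. One must double-check that this cyclic-chain pattern genuinely satisfies all four admissibility conditions: condition (1) is immediate (each torus links exactly two neighbours, algebraic linking number $\pm 1 \neq 0$, and the splitting number within the chain is $0$ because a neighbouring pair is a Hopf link which is not split — wait, one needs splitting number $0$ meaning they cannot be split, i.e.\ the geometric splitting number, which for a Hopf link is indeed positive; so here ``splitting number $0$'' must be read as in the paper's convention and one simply verifies the chain meets whatever that requires); conditions (2)--(4) follow from the standard wildness / non-simply-connected-complement properties of Antoine's necklace, established in \cite{Sh68b}.

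Granting such a construction, I would then argue as follows. For each $\ell \in \Z_2^{\N}$ let $D_\ell$ be the admissible Antoine-type decomposition built above. By construction $\nu([D_\ell]) = \ell$. If $\ell \neq \ell'$ then $\nu([D_\ell]) \neq \nu([D_{\ell'}])$, so $[D_\ell] \neq [D_{\ell'}]$ in $\Delta_3^a$. Since $\Z_2^{\N}$ is uncountable, the map $\ell \mapsto [D_\ell]$ has uncountable image, so $\Delta_3^a$ has at least uncountably many elements, all represented by Antoine decompositions. Strictly speaking, to land in $\Delta_3^a$ one should also confirm that the equivalence class is unaffected by the quotient operations defining $\Delta_3^a$ (factoring by slice decompositions and by classes of the form $[(S^3,\mathcal D,A,L)] + [(-S^3,\mathcal D,A,L)]$): this is exactly the point that $\nu$ vanishes on all such classes, which was observed just before the statement, so $\nu$ descends to the group $\Delta_3^a$ and the argument goes through verbatim.

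The main obstacle I anticipate is not the homomorphism property of $\nu$ (already proved) but verifying that the family of cyclic Hopf-chain necklaces with prescribed parities really consists of \emph{admissible} decompositions in the precise sense of Definition~\ref{adm}, and in particular that one has genuine freedom to choose the component counts $k_m$ at every stage independently while keeping each $k_m \geq 4$ and preserving conditions (2)--(4). The non-triviality condition (4) --- that essential curves on the torus boundaries do not die in the complement of the intersection Cantor set --- requires the classical wildness analysis of Antoine's necklace, which I would cite from \cite{Sh68b} rather than reprove; condition (3) (no sphere in a stage separates or contains the Cantor set) similarly follows from the standard picture. Once admissibility is in hand, the rest is a formal consequence of the invariant $\nu$ being a well-defined monoid homomorphism to an uncountable group.
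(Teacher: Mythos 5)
Your proposal takes essentially the same approach as the paper, whose proof is a two-line assertion that every sequence $(l_0,l_1,\ldots)\in\Z_2^{\N}$ with $l_0=0$ is realized as $\nu([x])$ for an Antoine decomposition $x$; your choice of the parity of the number $k_m\geq 4$ of Hopf-linked tori at each stage is precisely the realization the paper leaves implicit. (The splitting-number clause in Definition~\ref{adm} should be read as requiring $T$ to be split from the components \emph{other than} its two linked neighbours, which the cyclic chain satisfies, so the worry you raise there dissolves.)
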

\begin{proof}
For every element $(l_0, l_1, \ldots) \in \Z_2^{\N}$, where
 $l_0 = 0$,
 we have an Antoine decomposition representing a class $x$ such that 
 $\nu([x]) = (l_0, l_1, \ldots)$.
 Hence 
we get uncountably many different classes in the concordance group.
\end{proof}

\subsection{Computations in the cobordism group}\label{computebord}

%
%
%

\begin{prop}\label{manifcob}
Suppose that  $n \geq 0$ and  $M$ is a closed manifold. 
A closed  $n$-dimensional homology manifold $N$
having a resolution   $M \to N$  
is cobordant in $\mathfrak N_n^E$ to $M$.
\end{prop}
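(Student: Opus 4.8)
The plan is to produce an explicit cobordism between $M$ and $N$ in $\mathfrak N_n^E$, namely the mapping cylinder of the resolution map. Write $\pi \co M \to N$ for the resolution, so $\pi$ is a proper surjective cell-like map (the quotient map of a cell-like decomposition of $M$); since $N$ is closed, the condition $\pi^{-1}(\del N) = \del M$ forces $M$ to be a closed $n$-manifold, and properness makes $\pi$ a closed map. Put $W_0 = M \x [0,1]$ and let $\mathcal G$ be the decomposition of $W_0$ whose non-degenerate elements are the fibres $\pi^{-1}(y) \x \{1\}$, $y \in N$; since $\pi$ is closed and cell-like, $\mathcal G$ is an upper semi-continuous cell-like decomposition, and the quotient $W_0 / \mathcal G$ is the mapping cylinder $C_\pi$ of $\pi$ (collapsing $M \x \{1\}$ along the fibres of $\pi$ turns it into $N$ while leaving $M \x [0,1)$ untouched). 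Denote the cell-like decomposition map by $\pi_W \co W_0 \to C_\pi$. I will show that $C_\pi$ is a compact $(n+1)$-dimensional ENR homology manifold with $\del C_\pi$ homeomorphic to $M \sqcup N$; this at once gives $[M] = [N]$ in $\mathfrak N_n^E$.

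First, $C_\pi$ is finite dimensional: the projection $C_\pi \to [0,1]$ sending $(x,t) \mapsto t$ on $M \x [0,1)$ and $N \mapsto 1$ is a closed surjection of compacta all of whose point preimages are copies of $M$ or of $N$, hence are at most $n$-dimensional, so Hurewicz's theorem on dimension-raising maps \cite{HW41} gives $\dim C_\pi \le \dim [0,1] + n = n+1$; equality holds since $C_\pi$ contains open subsets of $M \x (0,1)$. Being a separable metric decomposition space of the locally compact space $W_0$, it is an ANR by \cite[Corollary~7.4.8]{DV09}, hence an ENR. Since $C_\pi$ is a finite dimensional cell-like quotient of the manifold $W_0$, it is a homology manifold, exactly as in the proof of Lemma~\ref{decomphomolmanif}: double $W_0$ along its boundary to the closed manifold $M \x S^1$, extend $\mathcal G$, apply \cite[Proposition~8.5.1]{DV09} to the resulting finite dimensional cell-like quotient to get a closed $(n+1)$-dimensional homology manifold, and cut it back along $\pi_W(\del W_0)$; this realizes $C_\pi$ as a homology manifold with $\del C_\pi = \pi_W(M \x \{0\}) \sqcup \pi_W(M \x \{1\}) = M \sqcup N$.

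Alternatively, the boundary is located directly by computing local homology with the Vietoris--Begle theorem \cite[Theorem~0.4.1]{DV09}: points of $M \x (0,1)$ have Euclidean neighbourhoods and are interior points; points of $\pi_W(M \x \{0\})$ have collar neighbourhoods, so $H_*(C_\pi, C_\pi - p) = 0$ there; and for $p \in N$, applying Vietoris--Begle to the cell-like map $\pi_W$ and to its restriction over $C_\pi - p$ yields $H_*(C_\pi, C_\pi - p) \cong H_*\bigl(W_0, W_0 - \pi_W^{-1}(p)\bigr)$ with $\pi_W^{-1}(p) = \pi^{-1}(y) \x \{1\}$ a cell-like compactum lying in the boundary collar of $W_0$, so this group vanishes in every degree (by Alexander duality the complement of a cell-like compactum in a half-space is acyclic). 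Hence every point of $N$ is a boundary point, and $C_\pi$ is a compact $(n+1)$-dimensional ENR homology manifold cobounding $M$ and $N$, so $[M] = [N]$ in $\mathfrak N_n^E$; choosing compatible orientations gives the oriented statement in $\Omega_n^E$. The main obstacle is exactly this last point: verifying that passing to the mapping cylinder introduces no non-manifold points at the collapsed level and that this level, together with $M \x \{0\}$, is all of $\del C_\pi$; everything else is routine.
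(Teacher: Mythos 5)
Your construction is the same in spirit as the paper's: both put a cell-like decomposition on $M \times [0,1]$ that crushes one end to $N$ and leaves the other end alone, and both must then certify that the quotient is a compact ENR homology manifold with boundary $M \sqcup N$. The difference is where the nondegenerate elements live, and it matters. The paper takes $\mathcal E = \mathcal D \times \mathcal S([0,1/2])$ together with singletons on $M \times (1/2,1]$, so the decomposition is a genuine product over a collar of the crushed boundary component; the quotient near that end is $N \times [0,1/2]$, the double of $\mathcal E$ on $M \times [0,1] \cup_{\varphi} M \times [0,1]$ is again a product near the cut locus, and cutting the closed homology manifold $P$ along the bicollared copy of $N$ is harmless. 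You instead concentrate all nondegenerate elements on the single slice $M \times \{1\}$, obtaining the bare mapping cylinder $C_\pi$. That choice breaks the step you describe as ``exactly as in the proof of Lemma~\ref{decomphomolmanif}'': that lemma (like condition (4) of Definition~\ref{defseq}) explicitly requires the decomposition to be a product in a collar of the boundary, and for your $\mathcal G$ the image of $M \times \{1\}$ in the doubled quotient is not bicollared --- its neighbourhood is a double mapping cylinder of $\pi$, not $N \times (-\epsilon, \epsilon)$ --- so cutting along it does not automatically yield a homology manifold with boundary $N$. What rescues your proof is the ``alternative'' you give second: the direct Vietoris--Begle/cell-like-map computation $H_*(C_\pi, C_\pi - p) \cong H_*\bigl(W_0, W_0 - \pi^{-1}(y) \times \{1\}\bigr)$, which vanishes because both $W_0$ and $W_0 - \pi^{-1}(y) \times \{1\}$ deformation retract to $M \times \{0\}$. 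Together with your ANR and finite-dimensionality checks this is a complete and genuinely different route to the same conclusion: it identifies the cobordism explicitly as the mapping cylinder, at the cost of computing local homology at the collapsed level by hand --- exactly the work the paper's half-collar trick is designed to avoid. If you prefer to keep the doubling argument, replace $\mathcal G$ by the paper's product-over-a-collar version (equivalently, fatten your slice decomposition by an external collar $N \times [0,1/2]$) so that Lemma~\ref{decomphomolmanif} applies verbatim.
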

\begin{proof}
Take $M \x [0,1]$ and consider the cell-like decomposition $\mathcal D$ of $M$ which results the homology manifold $N$.
 If $\mathcal S(X)$ denotes the collection of singletons in a space $X$, 
then
$\mathcal D \x \mathcal S([0, 1/2])$ union $\mathcal S( M \x (1/2, 1] )$ 
 is a cell-like decomposition of $M \x [0,1]$, denote it by $\mathcal E$.
We have to show that 
the quotient space 
$$M \x [0,1] / \mathcal E$$
is an $(n+1)$-dimensional  homology manifold with boundary homology manifolds
$N$ and $M$.
Take the closed manifold $$M \x [0,1]  \cup_{\va} M \x [0,1],$$
where
$\va \co \del (  M \x [0,1])  \to \del (M \x [0,1])$
is the identity map.
Since $M/ \mathcal D$ is $n$-dimensional, 
the doubling of the decomposition $\mathcal E$ on 
$M \x [0,1]  \cup_{\va} M \x [0,1]$
yields
a finite dimensional quotient space, we get this by using 
estimations for the covering dimension, see \cite{HW41} and \cite[Corollary~2.4A]{Da86}.
So the decomposition space $P$ obtained by factorizing 
$M \x [0,1]  \cup_{\va} M \x [0,1]$
by the double of $\mathcal E$ is a closed  finite dimensional homology manifold by  \cite[Proposition~8.5.1]{DV09}. 
Since this space 
has an open set homeomorphic to $\R^{n+1}$, it is $(n+1)$-dimensional.
We obtain the space $M \x [0,1] / \mathcal E$
by cutting $P$ into two pieces along two subsets homeomorphic to 
$M$ and $N$. This means that $M$ and $N$ are cobordant in $\mathfrak N_n^E$.
\end{proof}

So if every $3$-dimensional  homology manifold is resolvable, then 
$\mathfrak N_3^E = 0$.
Also note that  the decomposition space $S^3 / \mathcal W$ of the Whitehead decomposition $\mathcal W$
is a null-cobordant $3$-dimensional homology manifold, because $[S^3] = 0$.

\begin{prop}\label{cobsubgroup}
For $n \geq 4$ the cobordism  group $\mathfrak N_n$ is a subgroup of $\mathfrak N_n^E$.
\end{prop}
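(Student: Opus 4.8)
The plan is to produce a map $\mathfrak N_n \to \mathfrak N_n^E$ in the obvious way --- a closed manifold is in particular a (compact, ENR) homology manifold, and a manifold cobordism is a homology-manifold cobordism --- and then show this map is injective. The first step is to check that the assignment $[M] \mapsto [M]$ is well-defined as a homomorphism: if $M_1$ and $M_2$ are cobordant as manifolds via $W$, then $W$ is an $(n+1)$-dimensional compact ENR homology manifold with $\del W = M_1 \sqcup M_2$, so $[M_1] = [M_2]$ in $\mathfrak N_n^E$; additivity under disjoint union is immediate. This part is routine.

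The substance is injectivity. Suppose $M$ is a closed $n$-manifold that bounds, in the sense of $\mathfrak N_n^E$, a compact $(n+1)$-dimensional ENR homology manifold $W$, i.e. $\del W = M$. I want to conclude that $M$ already bounds a genuine manifold. Since $n \geq 4$, we have $n+1 \geq 5$, so Theorem~\ref{resol} applies to $W$ (after passing to connected components, using that the local obstruction $i$ is constant on connected open sets, hence on each component): the obstruction $i(W) \in 8\Z + 1$ is defined, and by part (2) of Theorem~\ref{resol} we have $i(\del W) = i(W)$. But $\del W = M$ is a topological manifold, so $i(M) = 1$; hence $i(W) = 1$ on every component, and part (5) of Theorem~\ref{resol} gives a resolution $f \co V \to W$ with $V$ a compact $(n+1)$-dimensional manifold, $f$ proper and cell-like, and $f^{-1}(\del W) = \del V$. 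Because $f$ restricts to a cell-like --- hence, over a manifold, degree-one proper --- map $\del V \to \del W = M$, and a cell-like map onto a manifold of dimension $n \geq 4$ between manifolds is a near-homeomorphism by the cell-like approximation theorem, $\del V$ is homeomorphic to $M$ (one may also argue directly that the resolution may be taken to be a homeomorphism over the boundary, exactly as in the proof of Lemma~\ref{resolcob}, invoking \cite[Theorem~1.1]{Qu83}). Thus $V$ is a genuine manifold with $\del V \cong M$, so $[M] = 0$ in $\mathfrak N_n$.

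Combining the two steps: the natural homomorphism $\mathfrak N_n \to \mathfrak N_n^E$ has trivial kernel, so $\mathfrak N_n$ is a subgroup of $\mathfrak N_n^E$. The main obstacle is the injectivity step, and within it the delicate point is making sure the resolution of $W$ can be arranged to have boundary precisely $M$ rather than merely some manifold cell-like equivalent to $M$; this is handled by the relative version of Quinn's resolution theorem used in Lemma~\ref{resolcob}, or alternatively by the cell-like approximation theorem in dimension $n \geq 4$. The oriented case is identical, giving that $\Omega_n$ is a subgroup of $\Omega_n^E$, consistent with the already-quoted \cite[Corollary~4.2]{Ma71}.
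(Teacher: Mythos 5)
Your proof is correct and takes essentially the same route as the paper: both deduce from Theorem~\ref{resol}(2) that the local obstruction of a homology-manifold cobordism with manifold boundary equals $1$, resolve the cobordism via Quinn's theorem, and arrange the resolution to be a homeomorphism over the boundary using \cite[Theorem~1.1]{Qu83} as in Lemma~\ref{resolcob}. The paper's argument is simply a terser version of yours.
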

\begin{proof}
Let $M_{1}$ and $M_2$ be closed manifolds. 
If the two cobordism classes $[M_1]$  and $[M_2]$ in 
$\mathfrak N_n^E$ coincide, then 
since $M_i$ are manifolds, we have $i(M_i)=1$ hence 
a cobordism in $\mathfrak N_n^E$ between $M_1$ and $M_2$ also has index $1$ so
this cobordism is resolvable. 
By \cite[Theorem~1.1]{Qu83} and Lemma~\ref{resolcob} there is a manifold cobordism between 
$M_1$ and $M_2$.
\end{proof}


  In Definition~\ref{borddecomp} for $i = 1, 2$ the space $M_i/\mathcal D_i$ is an $n$-dimensional ENR homology manifold and 
$W / \mathcal E$ is an $(n+1)$-dimensional ENR homology manifold if we add the appropriate collars by Lemma~\ref{decomphomolmanif}.  
 If $(M, \mathcal D)$ is such a cell-like decomposition, then 
we can assign the cobordism class of the decomposition space $M /  \mathcal D$
to the cobordism class of $(M, \mathcal D)$. 
This map 
$$\be_n \co \mathcal B_n \to \mathfrak N_n^E$$
$$ [(M, \mathcal D) ] \mapsto [M /\mathcal D]$$
is a group homomorphism.
The image of $\be_n$
contains the classes represented by 
topological manifolds since trivial decompositions always exist
and it contains also the classes represented by homology manifolds having appropriate resolutions.
For $n = 1, 2$  all the homology manifolds are topological manifolds \cite{Wi79}
 so the homomorphism $\be_n$ is 
surjective. 
Take the natural forgetting homomorphism 
$$
F_n \co  \mathcal B_n  \to \mathfrak N_n$$
$$
[(M, \mathcal D) ] \to [M].$$
For every $n \geq 0$ the diagram 
\begin{center}
\begin{graph}(6,2)
\graphlinecolour{1}\grapharrowtype{2}
\textnode {A}(0.5,1.5){$\mathcal B_n$}
\textnode {X}(5.5, 1.5){$\mathfrak N_n^E$}
\textnode {U}(3, 0){$\mathfrak N_n$}
\diredge {A}{U}[\graphlinecolour{0}]
\diredge {U}{X}[\graphlinecolour{0}]
\diredge {A}{X}[\graphlinecolour{0}]
\freetext (3,1.2){$\be_n$}
\freetext (1.2, 0.6){$F_n$}
\freetext (4.6, 0.6){$\va_n$}
\end{graph}
\end{center}
is commutative by Proposition~\ref{manifcob}, where $\va_n$ is the natural map assigning the cobordism class 
$[M] \in \mathfrak N_n^E$ to the cobordism class $[M] \in \mathfrak N_n$.

\begin{prop}\label{deltaimage}
For every $n \geq 0$ 
 the image of $\be_n$ is equal to the 
subgroup of $\mathfrak N_n^E$ generated by the cobordism classes of topological  
manifolds.
\end{prop}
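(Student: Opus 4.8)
The plan is to establish the two inclusions separately, the substantive content being supplied entirely by results already at hand.

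\textbf{The image contains the subgroup.} For every closed $n$-dimensional manifold $M$ the trivial decomposition $\mathcal S(M)$ of $M$ into singletons is an element of $\mathcal B_n$: one realizes it by a toroidal defining sequence $C_0 \supset C_1 \supset \cdots$ of solid tori sitting inside a coordinate ball of $M$ and nested so that their diameters tend to $0$, whence $\cap_{m} C_m$ is a single point and every decomposition element is a singleton. (This is the remark before the proposition that ``trivial decompositions always exist''; for $n \le 1$ there is nothing to prove, since there every homology manifold is a manifold and the two groups coincide.) Then $\be_n\bigl([(M,\mathcal S(M))]\bigr)=[M/\mathcal S(M)]=[M]$, and since every class of $\mathfrak N_n$ is of the form $[M]$ for a closed $n$-manifold $M$, I conclude that the image of $\be_n$ contains the image of $\va_n$, which is by definition the subgroup of $\mathfrak N_n^E$ generated by the cobordism classes of topological manifolds.

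\textbf{The image is contained in the subgroup.} Take an arbitrary $[(M,\mathcal D)]\in\mathcal B_n$. By Definition~\ref{borddecomp} the decomposition $\mathcal D$ is cell-like and carried by a toroidal defining sequence on the closed manifold $M$, so by Lemma~\ref{decomphomolmanif} (applied with empty boundary; see also the discussion preceding the proposition) the quotient $N:=M/\mathcal D$ is a closed $n$-dimensional ENR homology manifold. The decomposition map $\pi\co M\to N$ is cell-like, proper since $M$ is compact, and satisfies $\pi^{-1}(\del N)=\emptyset=\del M$, hence it is a resolution of $N$ in the sense of the definition preceding Theorem~\ref{resol}. Proposition~\ref{manifcob} then yields
$$
\be_n\bigl([(M,\mathcal D)]\bigr)=[N]=[M]=\va_n\bigl([M]\bigr)
$$
in $\mathfrak N_n^E$, so the image of $\be_n$ is contained in the image of $\va_n$.

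Combining the two inclusions proves the proposition. The only points that require a little care are verifying that the all-singletons decomposition genuinely qualifies as a decomposition with toroidal defining sequence in the sense of Definition~\ref{borddecomp} — handled by the explicit nested shrinking tower of solid tori — and checking that $\pi\co M\to N$ satisfies the precise list of conditions in the definition of a resolution; I do not expect either to pose a real obstacle, as both are immediate from the definitions together with Lemma~\ref{decomphomolmanif}, and the essential step is the reduction $[M/\mathcal D]=[M]$ provided by Proposition~\ref{manifcob}.
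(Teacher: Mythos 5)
Your proof is correct and takes essentially the same route as the paper: the paper's one-line argument ("$F_n$ is surjective") relies on exactly the two ingredients you spell out, namely that trivial decompositions realize every class $[M]\in\mathfrak N_n$ and that the triangle $\be_n=\va_n\circ F_n$ commutes by Proposition~\ref{manifcob}. You have merely made both inclusions explicit.
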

\begin{proof}
The statement follows from the fact that
$F_n$ is surjective. 
\end{proof}

\begin{prop}\label{bordkep}
For $n \geq 1$, we have $\be_n(\mathcal B_n ) = \mathfrak N_n$ in $\mathfrak N_n^E$.
\end{prop}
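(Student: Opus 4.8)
The plan is to deduce Proposition~\ref{bordkep} from Propositions~\ref{deltaimage} and~\ref{cobsubgroup} together with the commutative triangle relating $\be_n$, $F_n$ and $\va_n$, after separating the cases $n\leq 3$.

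First I would use the commutativity of the diagram above, which gives $\be_n = \va_n\circ F_n$. The map $F_n\co \mathcal B_n\to \mathfrak N_n$ is surjective, since every closed $n$-manifold carries the trivial decomposition and its decomposition space is the manifold itself; hence $\be_n(\mathcal B_n) = \va_n(\mathfrak N_n)$. By Proposition~\ref{deltaimage} this image is the subgroup of $\mathfrak N_n^E$ generated by the cobordism classes of closed topological $n$-manifolds, and since disjoint unions of manifolds are manifolds and $2[M]=0$ in $\mathfrak N_n$, that subgroup is already the set $\{[M]\in \mathfrak N_n^E : M\text{ a closed topological } n\text{-manifold}\}$, i.e.\ exactly $\va_n(\mathfrak N_n)$ as a subset of $\mathfrak N_n^E$.

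It then remains to identify $\va_n(\mathfrak N_n)$ with $\mathfrak N_n$ inside $\mathfrak N_n^E$, that is, to know that $\va_n$ is injective. For $n\geq 4$ this is precisely Proposition~\ref{cobsubgroup} (which in turn rests on Quinn's resolution theorem, Theorem~\ref{resol}, and Lemma~\ref{resolcob}: a homology-manifold cobordism between two topological manifolds has local index $1$, hence resolves to a genuine manifold cobordism). For $n=1,2$, every ENR homology manifold of dimension $\leq 2$ is a topological manifold \cite{Wi79}, so $\va_n$ is an isomorphism and, as already remarked in the text, $\be_n$ is in fact onto. For $n=3$ one has $\mathfrak N_3=0$, so $\va_3$ is trivially injective and both sides of the claimed equality reduce to the zero subgroup (every closed topological $3$-manifold bounds as a manifold, hence as a homology manifold). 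Combining the cases yields $\be_n(\mathcal B_n)=\mathfrak N_n$ in $\mathfrak N_n^E$ for all $n\geq 1$.

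I do not expect a genuine obstacle inside this proof: all the real work is packaged into Proposition~\ref{cobsubgroup} (and thence into Quinn's resolution theorem). The one point that must be stated carefully is the passage from \emph{the subgroup generated by topological manifold classes} to \emph{the isomorphic copy of $\mathfrak N_n$} in $\mathfrak N_n^E$; this is exactly where injectivity of $\va_n$ is used, and it is the reason the statement is phrased with the clause ``in $\mathfrak N_n^E$''.
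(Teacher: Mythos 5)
Your overall strategy (reduce to Propositions~\ref{deltaimage} and \ref{cobsubgroup} via the commutative triangle, then check injectivity of $\va_n$ case by case) matches the paper for $n\geq 4$, $n=3$ and $n=1$, and those cases are fine. The problem is the case $n=2$. You argue that $\va_2$ is injective because every ENR homology manifold of dimension $\leq 2$ is a topological manifold. But injectivity of $\va_2\co \mathfrak N_2\to\mathfrak N_2^E$ is a statement about the \emph{cobordisms}: if a closed surface bounds a compact ENR homology manifold, that bounding object is $3$-dimensional, and Wilder's theorem says nothing about it. Three-dimensional ENR homology manifolds are not known to be topological manifolds (resolvability in dimension $3$ is exactly the delicate issue addressed by Thickstun under hypotheses on the singular set, and even a resolvable generalized $3$-manifold need not be a manifold), and Quinn's theory, hence Proposition~\ref{cobsubgroup} and Lemma~\ref{resolcob}, is only available for $n\geq 4$. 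So the step ``$\va_2$ is an isomorphism'' is not justified by what you cite; surjectivity follows from \cite{Wi79}, but injectivity does not.

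The paper closes this gap differently: since $\mathfrak N_2\cong\Z_2$ is generated by $[\RP^2]$, it suffices to show that $[\RP^2]\neq 0$ in $\mathfrak N_2^E$, i.e.\ that $\RP^2$ does not bound a compact $3$-dimensional ENR homology manifold. This is done by invoking the characteristic numbers for unoriented $\Z$-homology manifolds of Buoncristiano and Hacon \cite{BH91}: $\RP^2$ has a non-zero characteristic number, which obstructs null-cobordism even through homology-manifold cobordisms. You would need to insert an argument of this kind (or some other proof that a surface with odd Euler characteristic cannot bound a compact $3$-dimensional ENR homology manifold) to make your $n=2$ case complete; the rest of your proof stands.
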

\begin{proof}
By  Proposition~\ref{cobsubgroup} and Proposition~\ref{deltaimage} 
 we have $\be_n(\mathcal B_n ) = \mathfrak N_n$ for $n \geq 4$.
For $n = 3$, since $\mathfrak N_3 = 0$, 
the statement also holds. For $n = 2$,
The group $\mathfrak N_2$ is isomorphic to  $\Z_2$
so by Proposition~\ref{deltaimage} it is enough to show that 
$\be_2(\mathcal B_2 ) = \Z_2$. But
$[\RP^2]$ is not null-cobordant in $\mathfrak N_2^E$
because $\RP^2$ has a non-zero characteristic number as a smooth or topological manifold and then 
by \cite{BH91} it cannot be null-cobordant.
For $n = 1$, of course $\mathfrak N_1^E = \mathfrak N_1 = 0$.
\end{proof}

\begin{rem}
Instead of cell-like decompositions, which result homology manifolds, it would be possible to study 
decompositions which are just homologically acyclic and nearly $1$-movable, see \cite{DW83}. These result  homology manifolds as well.
 Without being nearly $1$-movable, these can result non-ANR homology manifolds. 
\end{rem}

As we could see, the class $\be_n([(M, \mathcal D)])  = [M/\mathcal D] \in \mathfrak N_n$ 
could  not expose a lot of things about the decomposition $\mathcal D$.
If we add more details to the homology manifolds and their cobordisms,
 then we could obtain a finer invariant of the cobordism group of decompositions. 
 Recall that 
 the singular set 
 of a homology manifold is the set of non-manifold points, which is a closed set.

\begin{defn}[$0$- and $1$-singular homology manifolds]\label{0-1-homolmanif}
A  homology manifold is \emph{$0$-singular} if its singular set  is a 
$0$-dimensional set.
A compact homology manifold with collared boundary is \emph{$1$-singular}  if its singular set $S$
 consists of properly embedded arcs such that $S$ is a direct product 
 in the  collar.  
The  closed $n$-dimensional  $0$-singular 
homology manifolds $X_1$ and $X_2$ are \emph{cobordant} 
if there exists  a compact $(n+1)$-dimensional \emph{$1$-singular} homology manifold $W$ such that 
$\del W$ is homeomorphic to the disjoint union of $X_1$ and $X_2$ and
 $\del W \cap S$ coincides with  the singular set of $X_1 \sqcup X_2$ under this homeomorphism.
 The set of (oriented) cobordism classes is denoted by 
$\mathfrak N_n^S$ (and $\Omega_n^S$).
\end{defn}

The set of cobordism classes
$\mathfrak N_n^S$ and $\Omega_n^S$ are groups with the disjoint union as group operation. 
Denote by $\mathfrak M_n^0$ the cobordism group of $0$-singular manifolds where the cobordisms are arbitrary but 
the singular set of the cobordisms is not the entire manifold.
%


Note that the representatives of the classes in $\be_n(\mathcal B_n)$ are $0$-singular 
and the cobordisms between them 
have not only singular points because the boundary has not only singular points since the singular set is a compact $0$-dimensional set. 
There are  natural homomorphisms
$$
 i_n' \co \mathcal B_n' \to  \mathfrak N_n^S \mbox{, \ \ \ \ \ }i_n \co \mathcal B_n \to  \mathfrak M_n^0 \mbox{\ \ \ \ \ and\ \ \ \ \ } \mathcal B'_n \to \mathcal B_n
 $$
 where $ \mathcal B_n'$ is the version of $\mathcal B_n$ yielding $0$-singular spaces
 and $1$-singular cobordisms, there is 
  the forgetful map 
$$
\va_n \co \mathfrak N_n^S \to \mathfrak M_n^0$$
and then the diagram 
\begin{equation*}
\begin{CD}
 \mathcal B'_n @> i_n' >>  \mathfrak N_n^S    \\
 @VVV @VV \va_n V  \\
\mathcal B_n @> i_n >> \mathfrak M_n^0  @>  \psi_n >> \mathfrak N_n^E 
\end{CD}
\end{equation*}
commutes.
Observe that $\psi_n$ is injective, $\va_n$ is surjective  and since $\be_n ( \mathcal B_n ) = \psi_n \circ i_n ( \mathcal B_n )= \mathfrak N_n$, 
the image $ i_n'(  \mathcal B_n' )$ is in $\va^{-1}_n \circ \psi^{-1}_n(  \mathfrak N_n )$, which could be a larger group
than $\mathfrak N_n$.

%
%
%
%
%
%
%
%


\begin{thebibliography}{AAAAAAA}

\bibitem[ALM68]{ALM68}
S.\ Armentrout, L.\ L.\ Lininger, D.\ V.\  Meyer,
Equivalent decompositions of $\R^3$, 
Pacific J.\ Math.\ {\bf 24} (1968), 205--227.


\bibitem[AS89]{AS89}
 F.\ D.\ Ancel and M.\ P.\ Starbird, The shrinkability of Bing-Whitehead decompositions, 
Topology {\bf 28} (1989), 291--304.


\bibitem[BKKPR21]{BKKPR21}
S.\ Behrens, B.\ Kalm\'ar, M.\ H.\ Kim,\ M.\ Powell, and A.\ Ray, editors, The disc embedding theorem, Oxford University Press, 2021.


\bibitem[BFMW96]{BFMW96}
J.\ Bryant, S.\ Ferry, W.\ Mio, S.\ Weinberger, 
Topology of homology manifolds, 
Ann.\ of Math.\ {\bf 143} (1996), 435--467.



\bibitem[BH91]{BH91}
S.\ Buoncristiano and D.\ Hacon, Characteristic numbers for unoriented $\Z$-homology manifolds, 
Trans.\ Amer.\ Math.\ Soc.\ {\bf 323} (1991), 651--663.


\bibitem[Ca78]{Ca78}
J.\ W.\ Cannon, 
$\Si^2 H^3 = S^5 / G$, Rocky Mountain J.\ Math.\ {\bf 8} (1978), 527--532.


\bibitem[Ca79]{Ca79}
J.\ W.\ Cannon, 
Shrinking cell-like decompositions of manifolds. Codimension three, 
Ann.\ of Math.\ {\bf 110} (1979), 83--112.



\bibitem[Da86]{Da86}
R.\ J.\ Daverman,
Decompositions of manifolds, Academic Press, 1986.

\bibitem[DV09]{DV09}
R.\ J.\ Daverman and 
G.\ A.\ Venema, Embeddings in manifolds, Amer.\ Math.\ Soc., 2009.


\bibitem[DW83]{DW83}
R.\ J.\ Daverman and J.\ J.\ Walsh, 
Acyclic decompositions of manifolds, Pacific J.\ Math.\ {\bf 109} (1983), 291--303.


%

%

%
%
%

\bibitem[Ed80]{Ed80}
 R.\ D.\ Edwards, The topology of manifolds and cell-like maps, in Proceedings of the International Congress of Mathematicians (Helsinki, 1978), 
 Acad.\ Sci.\ Fennica, Helsinki, pages 111--127, 1980.


\bibitem[Ed06]{Ed06}
R.\ D.\ Edwards, Suspensions of homology spheres, e-print, arXiv:math/0610573.


\bibitem[Ed16]{Ed16}
R.\ D.\ Edwards, 
Approximating certain cell-like maps by homeomorphisms, arXiv:1607.08270.
See Notices Amer.\ Math.\ Soc, {\bf 24} (1977), A-649. Abstract \# 751-G5


%


\bibitem[Fr82]{Fr82}
M.\ H.\ Freedman, 
The topology of four-dimensional manifolds, J.\ Diff.\ Geom.\ {\bf 17} (1982), 357--453.

\bibitem[FQ90]{FQ90}
M.\ H.\ Freedman and F.\ Quinn, Topology of $4$-manifolds, Princeton Univ.\ Press, Princeton, N.J., 1990.


\bibitem[GRW\v{Z}11]{GRWZ11}
D.\ Garity, D.\ Repov\v{s}, D.\ Wright and M.\ \v{Z}eljko, Distinguishing Bing-Whitehead Cantor sets, 
Trans.\ Amer.\ Math.\ Soc.\ {\bf 363} (2011), 1007--1022.


\bibitem[Ha51]{Ha51}
O.\ Hanner, Some theorems on absolute neighborhood retracts, Ark.\ Mat.\ {\bf 1} (1951),  389--408.

%



\bibitem[HLP22]{HLP22}
J.\ Hom, T.\ Lidman and J.\ Park,
Unknotting number and cabling, arxiv:2206.04196.



\bibitem[HW41]{HW41}
W.\ Hurewicz and H.\ Wallman, Dimension theory, 1941. 


%


\bibitem[Jo99]{Jo99}
H.\ Johnston,
Transversality for homology manifolds, Topology {\bf 38} (1999), 673--697.



\bibitem[JR00]{JR00}
H.\ Johnston and A.\ Ranicki, Homology manifold bordism, Trans.\ Amer.\ Math.\ Soc.\ {\bf 352} (2000), 5093--5137.










%
%
%
%
%

\bibitem[KP14]{KP14}
D.\ Kasprowski and M.\ Powell, Shrinking of toroidal decomposition spaces, Fundamenta Mathematicae {\bf 227} (2014), 271--296.


\bibitem[KT76]{KT76}
L.\ H.\ Kauffman and L.\ R.\ Taylor, Signature of links, Trans.\ Amer.\ Math.\ Soc.\ {\bf 216} (1976), 351--365.





\bibitem[Ma71]{Ma71}
C.\ R.\ F.\ Maunder, On the Pontrjagin classes of homology manifolds, Topology {\bf 10} (1971), 111--118.




\bibitem[Mi90]{Mi90}
W.\ J.\ R.\ Mitchell, Defining the boundary of a homology manifold, Proc.\ Amer.\ Math.\ Soc.\ {\bf 110} (1990), 509--513.


\bibitem[Mu65]{Mu65}
K.\ Murasugi, On a certain numerical invariant of link types, Trans.\ Amer.\ Math.\ Soc.\ {\bf 117} (1965), 387--422.

%



\bibitem[Qu82]{Qu82}
F.\ Quinn, Ends of maps.\ III.\ Dimensions 4 and 5, 
J.\ Differential Geometry {\bf 17} (1982), 503--521.


\bibitem[Qu83]{Qu83}
F.\ Quinn, Resolutions of homology manifolds, and the topological characterization of manifolds, 
Invent.\ Math.\ {\bf 72} (1983), 267--284.

\bibitem[Qu87]{Qu87}
F.\ Quinn, An obstruction to the resolution of homology manifolds, Michigan Math.\ J.\ {\bf 34} (1987), 285--291.



%
%

\bibitem[ST88]{ST88}
M.\ G.\ Scharlemann and A.\ Thompson, Unknotting number, genus, and companion tori, Math.\ Ann.\ {\bf 280} (1988), 191-- 205.


%



\bibitem[Sh68]{Sh68b}
R.\ B.\ Sher, 
Concerning wild Cantor sets in $E^3$, 
Proc.\ Amer.\ Math.\ Soc.\ {\bf 19} (1968), 1195--1200.





\bibitem[St68]{Sto}
{R. E. Stong},
Notes on cobordism theory,
Princeton University Press and the University of Tokyo Press, Princeton, 1968.


\bibitem[Th84]{Th84}
T.\ L.\ Thickstun, An extension of the loop theorem and resolutions
of generalized $3$-manifolds with $0$-dimensional singular set, Invent.\ Math.\ {\bf 78} (1984), 161--222.


\bibitem[Th04]{Th04}
T.\ L.\ Thickstun, Resolutions of generalized $3$-manifolds whose singular sets have general position dimension one, 
Topology Appl.\ {\bf 138} (2004), 61--95.

%
%
%


\bibitem[Wi79]{Wi79}
R.\ L.\ Wilder, Topology of Manifolds, Amer.\ Math.\ Soc.\ Colloq.\ Publ., vol. 32, Amer.\ Math.\
Soc., Providence, R.I. Reprint of the 1963 edition, 1979.



\bibitem[\v{Z}e05]{Ze05}
M.\ \v{Z}eljko, Genus of a Cantor set, Rocky Mountain J.\ Math.\ {\bf 35} (2005), 349--366.




\end{thebibliography}
\end{document}